\newcommand{\tb}{\boldsymbol}
\newcommand{\bo}{\bold}
\newcommand{\mbb}{\mathbb}
\newcommand{\leb}{\lambda}
\newcommand{\ld}{\ldots}
\newcommand{\ra}{\rightarrow}
\newcommand{\D}{\Delta_{K_h}}
\newcommand{\bsa}{\boldsymbol{a}}
\newcommand{\bsb}{\boldsymbol{b}}
\newcommand{\bsf}{\boldsymbol{f}}
\newcommand{\bst}{\boldsymbol{t}}
\newcommand{\bsv}{\boldsymbol{v}}
\newcommand{\bsw}{\boldsymbol{w}}
\newcommand{\bsx}{\boldsymbol{x}}
\newcommand{\bsy}{\boldsymbol{y}}
\newcommand{\bsalpha}{\boldsymbol{\alpha}}
\newcommand{\bsbeta}{\boldsymbol{\beta}}
\newcommand{\bszero}{\boldsymbol{0}}
\newcommand{\bsone}{\boldsymbol{1}}
\newcommand{\cP}{\mathcal{P}}
\newcommand{\lc}{\left\lceil}
\newcommand{\rc}{\right\rceil}
\newcommand{\lo}{\lceil \log_qd \rceil}
\newcommand{\be}{\boldsymbol{\beta}}
\newcommand{\Zb}{\overline{\mbb{Z}}_q((t^{-1}))}
\newcommand{\Z}{\mbb{Z}_q((t^{-1}))}
\newcommand{\Zm}{\mbb{Z}_q^{d \times (n-1)}}
\newcommand{\ZZ}{\mathbb{Z}}
\theoremstyle{plain}
\newtheorem{thm}{Theorem}
\newtheorem{defi}{Definition}
\newtheorem{lem}[thm]{Lemma}
\newtheorem{cor}[thm]{Corollary}
\newtheorem{claim}{Claim}
\newtheoremstyle{myRemark}
	{\topsep}   % ABOVESPACE
  {\topsep}   % BELOWSPACE
  {\normalfont}  % BODYFONT
  {0pt}       % INDENT (empty value is the same as 0pt)
  {\itshape} % HEADFONT
  {.}         % HEADPUNCT
  {5pt plus 1pt minus 1pt} % HEADSPACE
  {}          % CUSTOM-HEAD-SPEC
\theoremstyle{myRemark}
\newtheorem{rem}{Remark}
\begin{document}

\title{Metrical star discrepancy bounds for lacunary subsequences of digital Kronecker-sequences and polynomial tractability}
\author{Mario Neum\"uller and Friedrich Pillichshammer\thanks{The authors are supported by the Austrian Science Fund (FWF): Projects F5505-N26 (M.N.) and F5509-N26 (F.P.), which are parts of the Special Research Program ``Quasi-Monte Carlo Methods: Theory and Applications''.}}

\date{}

\maketitle

\begin{abstract}
The star discrepancy $D_N^*(\cP)$ is a quantitative measure for the irregularity of distribution of a finite point set $\cP$ in the multi-dimensional unit cube which is intimately related to the integration error of quasi-Monte Carlo algorithms. It is known that for every integer $N \ge 2$ there are point sets $\cP$ in $[0,1)^d$ with $|\cP|=N$ and $D_N^*(\cP) =O((\log N)^{d-1}/N)$. However, for small $N$ compared to the dimension $d$ this asymptotically excellent bound is useless (e.g. for $N \le {\rm e}^{d-1}$). 

In 2001 it has been shown by Heinrich, Novak, Wasilkowski and Wo\'{z}niakowski that for every integer $N \ge 2$ there exist point sets $\cP$ in $[0,1)^d$ with $|\cP|=N$ and $D_N^*(\cP) \le C \sqrt{d/N}$. Although not optimal in an asymptotic sense in $N$, this upper bound has a much better (and even optimal) dependence on the dimension $d$. 

Unfortunately the result by Heinrich et al. and also later variants thereof by other authors are pure existence results and until now no explicit construction of point sets with the above properties is known. Quite recently L\"obbe studied lacunary subsequences of Kronecker's $(n \bsalpha)$-sequence and showed a metrical discrepancy bound of the form $C \sqrt{d (\log d)/N}$ with implied absolute constant $C>0$ independent of $N$ and $d$. 

In this paper we show a corresponding result for digital Kronecker sequences, which are a non-archimedean analog of classical Kronecker sequences.
\end{abstract}

\centerline{\begin{minipage}[hc]{130mm}{
{\em Keywords:} star discrepancy, digital Kronecker-sequence, polynomial tractability, quasi-Monte Carlo\\
{\em MSC 2010:} 11K38, 11K31, 11K45}
\end{minipage}} 

\section{Introduction}

For an $N$-element point set $\cP=\{\bsx_1,\ldots,\bsx_N\}$ in the $d$-dimensional unit cube $[0,1)^d$ the {\it star discrepancy} $D_N^*$ is defined as $$D_N^*(\cP)=\sup_J \left|\frac{A(J,\cP)}{N}-\lambda(J)\right|,$$ where the supremum is extended over all intervals of the form $J=[\bszero,\bst)=\prod_{j=1}^d[0,t_j)$ with $t_j \in [0,1]$, $\bst=(t_1,\ldots,t_d)$, $A(J,\cP)$ is the number of indices $n \in \{1,2,\ldots,N\}$ for which $\bsx_n$ belongs to $J$ and $\lambda(J)$ is the Lebesgue measure of $J$. 

The star discrepancy is a quantitative measure for irregularity of distribution of a point set $\cP$. It is also intimately related to the integration error of a quasi-Monte Carlo (QMC) algorithm via the celebrated Koksma-Hlawka inequality. More information about star discrepancy and its relation to uniform distribution theory and numerical integration can be found in the books \cite{DP10,DT,KN,LP14a,N92}.

Let ${\rm disc}^*(N,d)=\inf_{\cP} D_N^*(\cP)$, where the infimum is extended over all $N$-element point sets $\cP$ in $[0,1)^d$, be the {\it minimal star discrepancy} and let, for $\varepsilon \in (0,1]$, the {\it inverse of the star discrepancy} be defined as $$N^*(\varepsilon,d)=\min\{N \in \mathbb{N}\ : \ {\rm disc}^*(N,d) \le \varepsilon\}.$$

For fixed dimension $d\ge 2$  it is known that there exist $0 < c_d < C_d$ and $\eta_d\in (0,\tfrac{1}{2})$ such that $$c_d \frac{(\log N)^{\frac{d-1}{2}+\eta_d}}{N} \le {\rm disc}^*(N,d) \le C_d \frac{(\log N)^{d-1}}{N}  \ \ \ \mbox{ for all $N \ge 2$.} $$ The lower bound was shown by Bilyk, Lacey and Vagharshakyan~\cite{BLV08} and for the upper bound there are several explicit constructions 
of point sets which achieve such a star discrepancy (see, for example, \cite{DP10,N92} and the references therein). For growing $d$ the exponent $\eta_d$ in the lower bound tends to zero (approximately with order $d^{-2}$). It should be mentioned that the exact determination of the power in the $\log N$-term of the minimal star discrepancy is a very famous and difficult open problem.

In this paper we consider a different view point. It was pointed out in several discussions that the excellent asymptotic behavior of the minimal star discrepancy of $N$-element point sets is not very useful for practical applications, especially when the dimension $d$ is not small. For example it should be noted that $N \mapsto (\log N)^{d-1}/N$ does not start to decrease until $N \ge \exp(d-1)$ and this number is huge already for moderately large $d$. In applications of QMC-algorithms however the dimension $d$ could be in the hundreds (see \cite{DKS,LP14a}).    

Since the last one and a half decades a lot of effort has been put into the analysis of the star discrepancy with respect to dimensions $d$ tending to infinity. In a seminal work by Heinrich, Novak, Wasilkowski and Wo\'{z}niakowski \cite{hnww} it has been shown that there exists an absolute constant $C>0$ such that 
\begin{equation}\label{dihnww}
{\rm disc}^*(N,d) \le C \sqrt{\frac{d}{N}} \ \ \ \mbox{ for all $d,N \in \mathbb{N}$}
\end{equation}
(see \cite[Theorem~3]{hnww}). Later Aistleitner \cite[Theorem~1]{aist11} showed that the constant $C$ can be chosen as $C=10$. From this we obtain $$N^*(\varepsilon,d) \le 100 d \varepsilon^{-2}.$$ In the language of ``Information Based Complexity'' one says that the star discrepancy is {\it polynomially tractable}, see \cite{NW08,NW10}.

On the other hand Hinrichs \cite{h2004} showed that $N^*(\varepsilon,d) \ge c d \varepsilon^{-1}$ for all $d \in \mathbb{N}$ and for sufficiently small $\varepsilon>0$. Hence {\it the inverse of the star discrepancy depends linearly on the dimension}, which is also the programmatic title of \cite{hnww}.

The proof in \cite{aist11} and also the proof of the slightly weaker bound \begin{equation}\label{dihnww1}
{\rm disc}^*(N,s) \le C \sqrt{\frac{d}{N}} \ (\log d + \log N)^{1/2}
\end{equation}                                                                                                                                                                                                      in \cite[Theorem~1]{hnww} (which still implies that $N^*(\varepsilon,d) \le C d \varepsilon^{-2} \log (d/\varepsilon) $) use the probabilistic method. The main ingredient is the fact that one can obtain extremely small probabilities for the deviation from the mean for sums of independent random variables. This probability can be quantified with the help of Bernstein's (in \cite{aist11}) or Hoeffding's (in \cite{hnww}) inequality, respectively. In fact, the point sets in \cite{aist11,hnww} consist of $N$ independently chosen random points from the unit cube $[0,1)^d$. So far no explicit construction of point sets whose star discrepancy satisfies a bound like \eqref{dihnww} or \eqref{dihnww1} is known. 

Some authors, initiated in \cite{DGS}, presented algorithmic constructions of point sets with star discrepancy of order \eqref{dihnww1}. We refer to the survey \cite{gnew12} for more information and references in this direction. However, all these constructions have the disadvantage that their run times are too large in order to be applied in practical applications with large dimension $d$. So there is still need for a really explicit construction.

In 2014 L\"obbe \cite{loeb} studied lacunary subsequences of Kronecker-sequences $(\{n \bsalpha\})_{n \ge 0}$, where $\bsalpha \in \mathbb{R}^d$ and where $\{\cdot\}$ denotes the fractional part applied component-wise to a vector (until now the paper is only available via arXiv.org). Based on the work of Aistleitner, L\"obbe was able to prove the following remarkable metrical result which can be interpreted as a semi-probabilistic (or semi-constructive) version of \eqref{dihnww1}.  

For $\bsalpha \in [0,1)^d$ let $\cP_N(\bsalpha)=\{\bsx_1,\ldots,\bsx_N\}$ be the point set consisting of the first $N$ elements of the infinite sequence $(\bsx_n)_{n \ge 1}$ in $[0,1)^d$ with $\bsx_n=\{ 2^{n-1} \bsalpha\}$ for $n \in \mathbb{N}$.

\begin{thm}[L\"obbe {\cite[Theorem~1.1]{loeb}}]\label{thm0}
Let $N \ge 1$ and $d \ge 2$ be integers. Then for every $\varepsilon \in (0,1)$ there is a quantity $C(\varepsilon)>0$ such that  the star discrepancy of the point set $\cP_N(\bsalpha)$ satisfies $$D_N^*(\cP_N(\bsalpha)) \le C(\varepsilon) \sqrt{\frac{d \log d}{N}}$$ with probability at least $1-\varepsilon$. The quantity $C(\varepsilon)$ is of order $C(\varepsilon) \asymp \log \varepsilon^{-1}$.
\end{thm}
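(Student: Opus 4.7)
The plan is to imitate Aistleitner's probabilistic proof of~\eqref{dihnww} — bracketing cover, pointwise concentration, union bound — except that the source of randomness is no longer an i.i.d.\ sample of $N$ points but the single random shift $\bsalpha\in[0,1)^d$ acting on the lacunary orbit $\bsx_n=\{2^{n-1}\bsalpha\}$. I would assemble three ingredients: a small $\delta$-bracketing family of anchored boxes, a sub-Gaussian tail estimate for the centred count against one fixed box, and a union bound.

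The first and third ingredients are essentially off-the-shelf. Gnewuch's construction supplies a $\delta$-bracketing cover $\Gamma_\delta$ of the anchored boxes $[\bszero,\bst)\subset[0,1)^d$ with $|\Gamma_\delta|\le (Cd/\delta)^d$, and the standard sandwich gives
$$D_N^*(\cP_N(\bsalpha))\le \max_{J\in\Gamma_\delta}\left|\frac{1}{N}\sum_{n=1}^N \mathbf{1}_J(\{2^{n-1}\bsalpha\})-\lambda(J)\right|+\delta.$$
Choosing $\delta\asymp\sqrt{d(\log d)/N}$ absorbs the deterministic slack into the target rate, while $\log|\Gamma_\delta|\asymp d\log d$ governs the cost of the union bound. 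If a tail of the form $\Pr(|S_N(J,\bsalpha)|\ge\lambda)\le C\mathrm{e}^{-c\lambda^2/N}$ is available for the centred sum $S_N(J,\bsalpha):=\sum_{n=1}^N(\mathbf{1}_J(\{2^{n-1}\bsalpha\})-\lambda(J))$, then selecting $\lambda=C(\varepsilon)\sqrt{Nd\log d}$ with $C(\varepsilon)\asymp\log\varepsilon^{-1}$ makes the union-bound probability at most $\varepsilon$, and division by $N$ delivers the asserted bound.

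The substantive work lies in the second ingredient. Since the summands $\mathbf{1}_J(\{2^{n-1}\bsalpha\})$ all depend on the single random vector $\bsalpha$, Bernstein's inequality does not apply directly; one has to exploit that the lacunary doublings $2^{n-1}$ render these functions of $\bsalpha$ nearly independent. My preferred route is a Fourier expansion $\mathbf{1}_J(\bsx)-\lambda(J)=\sum_{\boldsymbol{k}\ne\bszero}\hat c_{\boldsymbol{k}}\,e(\boldsymbol{k}\cdot\bsx)$, after which $S_N(J,\cdot)$ becomes a weighted sum of lacunary trigonometric sums $\sum_{n=1}^N e(2^{n-1}\boldsymbol{k}\cdot\bsalpha)$; the Erd\H{o}s--G\'al/Aistleitner $L^p$ bounds for such sums, combined with the Chernoff passage from moments to probabilities, yield the required sub-Gaussian tail. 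A Hoeffding-style martingale argument along the filtration generated by the first $n$ binary digits of each coordinate of $\bsalpha$ is a plausible alternative, exploiting that truncating these digits perturbs $\{2^{n-1}\bsalpha\}$ only near $\partial J$. The main obstacle — and the origin of the extra $\sqrt{\log d}$ factor relative to~\eqref{dihnww} — is to carry these constants through uniformly in $d$, $N$ and $J$; this uniformity, and the sub-Gaussian (rather than sub-exponential) form of the tail, is precisely what Aistleitner's lacunary-sum machinery is engineered to provide and is the nontrivial input on which L\"obbe's argument rests.
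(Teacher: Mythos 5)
The paper does not reprove Theorem~\ref{thm0}; it cites L\"obbe and then proves the digital analogue (Theorem~\ref{thm1}) by a structurally identical argument, so I will compare your proposal against that proof, which follows L\"obbe's.

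Your proposal contains a genuine gap: the single-scale bracketing cover does not give the stated bound. With $\delta\asymp\sqrt{d(\log d)/N}$, Gnewuch's estimate gives $\log|\Gamma_\delta|\asymp d\log(1/\delta)\asymp d\log(N/d)$, not $d\log d$; for $N$ much larger than $d$ this is a different order. Even if you had a lossless sub-Gaussian tail $\Pr(|S_N(J)|\ge\lambda)\le C\mathrm{e}^{-c\lambda^2/N}$, the union bound over $\Gamma_\delta$ forces $\lambda/N\asymp\sqrt{d\log(1/\delta)/N}$, and optimising over $\delta$ gives $D_N^*\lesssim\sqrt{(d/N)\log(N/d)}$ --- an extra $\sqrt{\log N}$-type factor, i.e.\ a Heinrich--Novak--Wasilkowski--Wo\'zniakowski-style bound, not L\"obbe's. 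To remove that factor the proof must use dyadic chaining: fix a nested family of $q^{-h}$-bracketing covers $\tau_h$ for $h=0,\dots,H$, decompose $[\bszero,\bsy)$ into disjoint shells $K_h(\bsy)$ with $\lambda(K_h)\le q^{-h}$, and control $\sum_n\Delta_{K_h}(\bsx_n)$ separately at each scale with a \emph{variance-aware} Bernstein bound. The point is that $\mathrm{Var}(\Delta_{K_h})\le q^{-h}$, so the Bernstein exponent at scale $h$ is $\propto dh$, which exactly matches $\log|S_h|\asymp dh$; the thresholds $t_h\asymp\sqrt{Ndhq^{-h}2^{\kappa_h}}$ then sum geometrically and the $\sqrt{\log d}$ comes out of $\sqrt{2^{\kappa_h}}\asymp\sqrt{h+\log_q d}$. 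A Hoeffding bound without the variance control would again lose a $\log$ factor.

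Your second ingredient (the concentration step) also diverges from the paper's route. You propose Fourier expansion of $\mathbf{1}_J-\lambda(J)$ plus Erd\H{o}s--G\'al/Aistleitner $L^p$ estimates for lacunary trigonometric sums, or a martingale along the binary digit filtration. The paper, following L\"obbe, does neither: it observes that membership of $\bsx_n$ in $K_h$ is determined by finitely many, say $h+2+\lceil\log_q d\rceil$, digits of each coordinate of the random seed, so if one restricts to indices $n\equiv\gamma\pmod{2^{\kappa_h}}$ with $\kappa_h=\log_2(h+2+\lceil\log_q d\rceil)$, the successive lacunary shifts examine pairwise disjoint blocks of digits and the variables $\Delta_{K_h}(\bsx_{n_1}),\dots,\Delta_{K_h}(\bsx_{n_l})$ are \emph{exactly} independent (Lemma~\ref{independence}). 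This makes Bernstein applicable directly, after splitting into $2^{\kappa_h}$ residue classes, and it is precisely this split that produces the $\sqrt{2^{\kappa_h}}\asymp\sqrt{\log d}$ loss. Besides being cleaner, this route avoids the serious difficulty in your Fourier plan that $\sum_{\boldsymbol k}|\hat c_{\boldsymbol k}|$ diverges for box indicators and that uniform-in-$d$ multidimensional lacunary $L^p$ estimates would need to be supplied. In short: replace the one-shot cover by the multi-scale $K_h$-decomposition with Bernstein, and replace ``nearly independent via Fourier'' by ``exactly independent after residue-class thinning.''
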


The main problem in the proof of this result is to prove independence of certain random variables in order to be able to apply Bernstein's inequality. Of course, the elements of the classical Kronecker-sequence are not independent. For this reason the author studied lacunary subsequences of the form $(\{2^{n-1} \bsalpha\})_{n \ge 1}$ which led then to the desired independence properties. 

Theorem~\ref{thm0} makes an assertion for fixed $N$, i.e. for finite point sets. In 2007 Dick \cite{D07} considered the problem of the dependence of star discrepancy on the dimension $d$ also for infinite sequences and he gave an existence result. Compared to the bound \eqref{dihnww} for finite point sets the generalization is penalized with an extra $\sqrt{\log N}$-factor in the discrepancy estimate. Later Aistleitner \cite{aist13} improved this further and got rid of the $\sqrt{\log N}$-term. In contrast to the probabilistic approaches in, e.g., \cite{hnww,D07}, the proof in  \cite{aist13} is, like in \cite{loeb}, also of a semi-probabilistic nature in the sense that certain coordinates of the points are deterministic others are chosen randomly. This once more shows the relevance of semi-probabilistic construction in this context.

The following corollary to Theorem~\ref{thm0} addresses a metrical result for infinite sequences:

\begin{cor}\label{cor0}
Let $d\in \mbb{N}$ with $d\geq 2$. Then for every $\delta \in (0,1)$ there is a quantity $C(\delta)>0$ such that the star discrepancy of  $\cP_N(\bsalpha)$ satisfies 
	\begin{align*}
	D^*_N(\cP_N(\bsalpha)) \leq C(\delta) (\log N)\sqrt{\frac{d\log d}{N}} \ \ \ \mbox{ for all $N \ge 2$}
	\end{align*}
	with probability at least $1-\delta$. We have $C(\delta) \asymp \log \delta^{-1}$.
\end{cor}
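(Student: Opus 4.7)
The plan is to deduce Corollary~\ref{cor0} from Theorem~\ref{thm0} by a routine union bound over all $N \ge 2$, exploiting the mild $\log\varepsilon^{-1}$-dependence of the constant $C(\varepsilon)$. For each integer $N \ge 2$ I would fix a failure probability
\begin{align*}
\varepsilon_N \ = \ \frac{6\delta}{\pi^2 N^2},
\end{align*}
so that $\sum_{N \ge 2}\varepsilon_N \le \delta$. For each such $N$, Theorem~\ref{thm0} provides a measurable set $E_N \subseteq [0,1)^d$ of Lebesgue measure at least $1 - \varepsilon_N$ on which $D_N^*(\cP_N(\bsalpha)) \le C(\varepsilon_N)\sqrt{d\log d / N}$. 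Taking the countable intersection $\bigcap_{N \ge 2} E_N$ then yields, by the union bound, a set of measure at least $1-\delta$ on which the inequality holds simultaneously for all $N \ge 2$.

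It then remains to simplify $C(\varepsilon_N)$. From the asymptotic $C(\varepsilon) \asymp \log\varepsilon^{-1}$ one obtains an absolute constant $A > 0$ with
\begin{align*}
C(\varepsilon_N) \ \le \ A\bigl(2\log N + \log(\pi^2/(6\delta))\bigr).
\end{align*}
Since $\log N \ge \log 2$ whenever $N \ge 2$, the $\delta$-dependent term can be absorbed into $\log N$ at the cost of a multiplicative factor of order $\log \delta^{-1}$, yielding the claimed inequality $D_N^*(\cP_N(\bsalpha)) \le C(\delta)(\log N)\sqrt{d\log d/N}$ with $C(\delta) \asymp \log \delta^{-1}$.

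I do not anticipate a serious obstacle: the extra factor $\log N$ compared with Theorem~\ref{thm0} is precisely the logarithmic Borel--Cantelli cost of converting a fixed-$N$ probabilistic statement into one uniform in $N$, and the fact that this cost is only logarithmic (rather than, say, polynomial in $N$) depends critically on the fine $\log\varepsilon^{-1}$-behaviour of $C(\varepsilon)$ supplied by Theorem~\ref{thm0}. An alternative would be a dyadic chaining along $N_k = 2^k$ combined with the estimate $N D_N^*(\cP_N(\bsalpha)) \le N_k D_{N_k}^*(\cP_{N_k}(\bsalpha)) + (N_k - N)$ for $N_{k-1} \le N < N_k$, but the trivial additive $O(1)$ term produced this way would then have to be absorbed into the constant, which makes the direct per-$N$ union bound above cleaner.
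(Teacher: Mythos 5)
Your proposal is correct and takes essentially the same route as the paper's proof (of the analogous Corollary~\ref{cor1}, to which the paper refers for Corollary~\ref{cor0}): the same choice $\varepsilon_N = 6\delta/(\pi N)^2$, the same union bound over $N\ge 2$, and the same absorption of $\log\varepsilon_N^{-1} = 2\log N + \log(\pi^2/(6\delta))$ into $C(\delta)\log N$.
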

Concerning the proof of Corollary~\ref{cor0} we will refer to Section~\ref{sec_proof_cor1}. \\

There is an interesting connection of Corollary~\ref{cor0} to the theory of normal numbers which is worth to be mentioned: it is well-known that a real number $\alpha$ is normal to base 2, if and only if the sequence $(\{2^{n-1} \alpha\})_{n \ge 1}$ is uniformly distributed modulo one (see \cite[Chapter~1, Theorem~8.1]{KN}). Hence the $\bsalpha$'s which satisfy the discrepancy estimate in Corollary~\ref{cor0} are $d$-tuples of normal numbers to base 2. (By another well-known result due to Borel~\cite{bor} almost all numbers $\alpha \in [0,1]$ are normal to every base $b \ge 2$.)

It should also be mentioned, that metrical bounds on the star-discrepancy of classical Kronecker-sequences for fixed $d$ have been given by Beck in \cite{beck}.\\

In this paper we study digital Kronecker-sequences which are a ``non-archimedean analog'' to classical Kronecker-sequences and which fit into the class of  digital $(t,s)$-sequences. This concept was introduced by Niederreiter~\cite[Section~4]{N92} and further investigated by Larcher and Niederreiter~\cite{LN93}. We will give a digital analog of Theorem~\ref{thm0}. In the next section we provide the necessary definitions and we formulate the metrical discrepancy estimate. The proof of our result will be presented in Section~\ref{sec_proof}.

\section{Digital Kronecker-sequences and formulation of the main result}

Let $q$ be a prime number and let $\mbb{Z}_q=\{0,\ld,q-1\}$ be the finite field of order $q$ with the usual arithmetic operations modulo $q$. We denote the field of formal Laurent series over $\mathbb{Z}_q$ in the variable $t^{-1}$ by $\Z$. Elements of $\Z$ are of the form 
\begin{align}\label{defLser}
g=\sum_{i=w }^{\infty} g_i t^{-i},
\end{align}
where $w$ is an arbitrary integer and all $g_i \in \mathbb{Z}_q$ with $g_w \not=0$. %For $g\in \Z$ of the form \eqref{defLser} we will also write $g=(g_{i})_{i \in \mbb{Z}}$, where we set $g_i=0$ for indices $i<w$. 
Note that $\Z$ contains the polynomial ring $\mbb{Z}_q[t]$ over $\mathbb{Z}_q$. 

For a formal Laurent series $g$ of the form \eqref{defLser} we define its ``fractional part'' by $$\{g\}:=\sum_{i=\max(1,w)}^{\infty} g_it^{-i}.$$ Let
\begin{align*}
\Zb:=& \left\{\{g\} \ : \ g \in \Z \right\}\\
 = & \{g \in \Z  \ : \ g \mbox{ of the form \eqref{defLser} with $w \ge 1$}\}
\end{align*}
%Elements $g=\sum_{i=1}^{\infty} g_i t^{-i}\in \Z$ are also sometimes written as $(g_i)_{i \ge 1}$. 
and define further  
\begin{align*}
\phi:\Zb \rightarrow [0,1),\ \ \ \sum_{i=1}^{\infty} g_i t^{-i}\mapsto \sum_{i=1}^{\infty}g_i q^{-i}.
\end{align*}
Applied to vectors the operations $\{ \cdot \}$ and $\phi$ are understood component-wise. 

We associate a nonnegative integer $n$ with $q$-adic expansion $n=n_0+n_1 q+\cdots+n_r q^r$, where $n_0,\ldots,n_r \in \mathbb{Z}_q$ with the polynomial $n(t)=n_0+n_1 t+\cdots +n_t t^r$ in $\mbb{Z}_q[t]$ and vice versa.

\begin{defi}\rm
For a given $d$-tuple $\bsf=(f_1,\ldots,f_d)$ of elements of $\Z$ the sequence $\mathcal{S}(\bsf)=(\bsy_n)_{n \ge 0}$ given by $$\bsy_n=\phi(\{n \bsf\}) =(\phi(\{n f_1\}),\ldots,\phi(\{n f_d\}))\ \ \ \mbox{ for all $n \in \mathbb{N}_0$}$$ is called a {\it digital Kronecker-sequence} over $\mathbb{Z}_q$. Note that the multiplication of the polynomial $n$ and the Laurent series $f_j$ is carried out in $\Z$. (Obviously it suffices to choose $\bsf \in (\Zb)^d$.)
\end{defi}

In order to prove a metrical result for digital Kronecker-sequences we need to introduce a suitable probability measure on $(\overline{\ZZ}_q((t^{-1})))^d$. 

\begin{defi}\label{def_meas}\rm
By $\mu$ we denote the normalized Haar-measure on $\overline{\ZZ}_q((t^{-1}))$ and by $\mu_d$ the $d$-fold product measure on $(\overline{\ZZ}_q((t^{-1})))^d$. 
\end{defi}

\begin{rem}
The measure $\mu$ has the following rather simple shape: If we identify the elements $\sum_{k=1}^\infty g_k t^{-k}$ of $\overline{\ZZ}_q((t^{-1}))$ where $g_k\not=q-1$ for infinitely many $k$ in the natural way with the real numbers  $\sum_{k=1}^\infty g_k q^{-k}  \in [0,1)$, then, by neglecting the countably many elements where $g_k \not= q-1$ only for finitely many $k$, $\mu$ corresponds to the Lebesgue measure $\lambda$ on $[0,1)$. For example, the ``cylinder set'' $C(c_1,\ldots,c_m)$ consisting of all elements $g=\sum_{k=1}^\infty g_k t^{-k}$ from $\overline{\ZZ}_q((t^{-1}))$ with $g_k=c_k$ for $k=1,\ldots,m$ and arbitrary $g_k \in \ZZ_q$ for $k \ge m+1$ has measure $\mu( C(c_1,\ldots,c_m) )=q^{-m}$. 
\end{rem}

Metrical results for the star discrepancy of digital Kronecker-sequences for fixed dimension $d$ can be found in \cite{L95,LP14}. In the following we provide a non-archimedean version of the result of L\"obbe \cite{loeb}. 

For $\bsf \in (\Zb)^d$ let $\cP_N(\bsf)=\{\bsx_1,\ldots,\bsx_N\}$ be the point set consisting of the first $N$ elements of the infinite sequence $(\bsx_n)_{n \ge 1}$ in $[0,1)^d$ with $\bsx_n=\phi(\{t^{n-1} \bsf\})$ for $n \in \mathbb{N}$.

\begin{thm}\label{thm1}
	Let $q$ be a prime number and let $N,d\in \mbb{N}$ with $N,d\geq 2$. Then for every $\varepsilon \in (0,1)$ there is a quantity $C(q,\varepsilon)>0$ such that the star discrepancy of the point set $\cP_N(\bsf)$ satisfies 
	\begin{align*}
	D^*_N(\cP_N(\bsf)) \leq C(q,\varepsilon)\sqrt{\frac{d\log d}{N}}
	\end{align*}
	with probability at least $1-\varepsilon$. The quantity $C(q,\varepsilon)$ is of order $C(q,\varepsilon) \asymp \log \varepsilon^{-1}$.
\end{thm}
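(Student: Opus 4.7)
The plan is to carry out the non-archimedean analogue of the argument of L\"obbe~\cite{loeb}, based on the same two ingredients that appear in Aistleitner~\cite{aist11}: a bracketing cover of the anchored boxes $[0,\bst)\subset[0,1)^d$ to reduce the supremum in $D_N^*$ to a finite maximum, together with Bernstein's inequality applied to a splitting of $\{1,\ldots,N\}$ that turns the dependent indicator random variables into independent blocks. More concretely, I would fix parameters $m\in\mbb{N}$ (a digital resolution) and $\delta>0$, let $\Gamma_\delta$ be a $\delta$-bracketing cover of anchored boxes whose corner coordinates are multiples of $q^{-m}$ (of cardinality at most $(Cd/\delta)^d$, cf.\ the survey~\cite{gnew12}), and for each $J\in\Gamma_\delta$ prove a tail bound on $|\frac{1}{N}\sum_{n=1}^N \mathbf{1}_J(\bsx_n)-\lambda(J)|$; a union bound over $\Gamma_\delta$ and an optimization in $m$ and $\delta$ would then yield the theorem.

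The heart of the argument is an independence property specific to the digital setting. Under $\mu_d$, writing $f_j=\sum_{i\ge 1} g_i^{(j)}t^{-i}\in\Zb$, the coefficients $\{g_i^{(j)}:i\ge 1,\,1\le j\le d\}$ form a family of mutually independent uniform $\mbb{Z}_q$-valued random variables. Multiplication by $t^{n-1}$ acts as a shift on the digits of $f_j$, so that $\phi(\{t^{n-1}f_j\})=\sum_{k\ge 1} g_{n-1+k}^{(j)}q^{-k}$. Consequently, for any box $J\in\Gamma_\delta$ the indicator $\mathbf{1}_J(\bsx_n)$ is a function only of the digits $g_n^{(j)},\ldots,g_{n+m-1}^{(j)}$, $1\le j\le d$. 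Splitting $\{1,\ldots,N\}$ into the $m$ residue classes modulo $m$ will therefore yield, on each class, a sum of genuinely independent bounded random variables with variance $\lambda(J)(1-\lambda(J))$, to which Bernstein's inequality applies in its strongest, variance-sensitive form.

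Concatenating these ingredients, I would combine the concentration bound over the $m$ subsums (union-bound factor $m$) and over the cover $\Gamma_\delta$ (factor $(Cd/\delta)^d$), absorbing the approximation error of order $\delta+dq^{-m}$ arising from bracketing and digital truncation. This produces a failure probability of the form $m(Cd/\delta)^d \exp(-c\,\tau^2 N/(m(\lambda(J)+\tau)))$ for the event $D_N^*(\cP_N(\bsf))>\tau+O(\delta+dq^{-m})$. Choosing $\delta$ polynomial in $d$ and $m\asymp\log_q(d/\tau)$ should reduce this to the target $\tau\asymp\sqrt{d\log d/N}$, with the $\varepsilon$-dependence tracked through the log of the failure probability, producing the stated $C(q,\varepsilon)\asymp\log\varepsilon^{-1}$.

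The step I expect to be most delicate is this final optimization. The logarithmic factor $d\log(d/\delta)$ from the cover, the factor $m$ in the denominator of the Bernstein exponent from the splitting, and the truncation constraint $dq^{-m}\lesssim\tau$ pull against one another, and a naive balance is liable to produce an extraneous $\sqrt{\log N}$ factor rather than the desired $\sqrt{\log d}$. Restoring the correct rate requires using the variance-sensitive form of Bernstein rather than Hoeffding: for boxes $J$ with $\lambda(J)\lesssim\tau$ the failure probability is dominated by a much stronger Poissonian tail, while only a controlled number of boxes in $\Gamma_\delta$ can have large $\lambda(J)$. Handling this small-versus-large-volume dichotomy carefully, together with keeping explicit track of the prime $q$ in all constants, is in my view where the real work of the proof lies.
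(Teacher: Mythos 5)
Your framework has the right pieces -- a bracketing cover with $q^{-m}$-resolution corners, the observation that $\phi(\{t^{n-1}f_j\})$ is a digit shift so that the indicator of a box depends only on a window of $m$ consecutive digits, independence across residue classes modulo the window length, and Bernstein's inequality. You also correctly foresee that a naive single-scale balance yields an extraneous $\sqrt{\log N}$ factor. But the fix you propose does not close the gap. Your claim that ``only a controlled number of boxes in $\Gamma_\delta$ can have large $\lambda(J)$'' is false: a $\delta$-bracketing cover of $[0,1]^d$ contains $\Theta(\delta^{-d})$ brackets whose upper corner $\bsw$ satisfies $\lambda([\bszero,\bsw))\gtrsim 1$, so the bulk of the union bound still runs over boxes with variance of order one, and the variance-sensitive form of Bernstein does not help for those. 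Even granting the small-volume boxes the stronger Poissonian tail, you are still left with $\tau \gtrsim \sqrt{m\,d\log(d/\delta)/N}$ from the large-volume boxes, where $m\asymp\log_q(d/\delta)$; for $N\gg d$ the truncation constraint forces $m\gtrsim\log N$, and the $\sqrt{\log N}$ (in fact a full $\log N$) survives.

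What the paper actually does, following Aistleitner and L\"obbe, is \emph{dyadic chaining across scales}, which you have not included. For each test box $[\bszero,\bsy)$ one builds a nested chain $\bszero=\bsbeta_0\le\bsbeta_1\le\cdots\le\bsbeta_H\le\bsy\le\bsbeta_{H+1}$ drawn from $q^{-h}$-bracketing covers at scales $h=0,\ldots,H$, and writes $[\bszero,\bsy)$ (up to the top-scale error $q^{-H}\asymp\sqrt{d\log_q d/N}$) as a disjoint union of annuli $K_h=[\bszero,\bsbeta_{h+1})\setminus[\bszero,\bsbeta_h)$ with $\lambda(K_h)\le q^{-h}$. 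Bernstein is applied separately at each level $h$: the indicator of $K_h$ at $\bsx_n$ depends on $h+2+\lceil\log_q d\rceil$ consecutive digits, so the residue modulus grows linearly in $h$, but this is compensated by the geometric decay of the variance $\lambda(K_h)\le q^{-h}$. One chooses level-dependent thresholds $t_h\asymp\sqrt{Nd\,h\,q^{-h}\cdot(h+\log_q d)}$, and then $\sum_{h\ge 1}t_h/N\asymp\sqrt{d\log d/N}$ because the $q^{-h/2}$ factor dominates the polynomial growth in $h$, while the Bernstein exponent $\asymp dh$ matches $\log|S_h|$ at each level and a geometric series in $h$ controls the union over scales. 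The dependence on $N$ disappears entirely; no ``small-vs-large volume'' dichotomy is needed. To repair your argument you would essentially have to rediscover this multi-scale decomposition, since your own analysis shows the single-scale reduction cannot reach the claimed rate.
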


The proof of this result will be presented in the next section. It should be mentioned that with some more effort the quantity $C(q,\varepsilon)$ could be given explicitly.

Again Theorem~\ref{thm1} makes an assertion for fixed $N$, i.e. for finite point sets. From this we can again deduce a metrical result for infinite sequences:

\begin{cor}\label{cor1}
Let $q$ be a prime number and let $d\in \mbb{N}$ with $d\geq 2$. Then for every $\delta \in (0,1)$ there is a quantity $C(q,\delta)>0$ such that the star discrepancy of  $\cP_N(\bsf)$ satisfies 
	\begin{align*}
	D^*_N(\cP_N(\bsf)) \leq C(q,\delta) (\log N)\sqrt{\frac{d\log d}{N}} \ \ \ \mbox{ for all $N \ge 2$}
	\end{align*}
	with probability at least $1-\delta$. 
\end{cor}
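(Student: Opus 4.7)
The plan is to derive Corollary~\ref{cor1} directly from Theorem~\ref{thm1} by a Borel--Cantelli style union bound taken over \emph{every} index $N$, not just at dyadic scales. The observation that makes this approach work is the logarithmic growth $C(q,\varepsilon) \asymp \log \varepsilon^{-1}$ of the constant in Theorem~\ref{thm1}: it allows us to sharpen the per-$N$ failure probability to a summable $O(1/N^2)$ while only paying a single $\log N$ factor in the resulting bound.

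Concretely, for each integer $N \geq 2$ I would invoke Theorem~\ref{thm1} with failure probability $\varepsilon_N := \tfrac{6\delta}{\pi^2 N^2}$, obtaining a measurable set $E_N \subset (\Zb)^d$ with $\mu_d(E_N^c) \leq \varepsilon_N$ on which
$$D_N^*(\cP_N(\bsf)) \leq C(q,\varepsilon_N)\sqrt{\frac{d\log d}{N}}.$$
Since $\sum_{N \geq 2}\varepsilon_N \leq \delta$, a union bound gives $\mu_d\bigl(\bigcup_{N\geq 2} E_N^c\bigr) \leq \delta$, so on the complementary event --- of measure at least $1-\delta$ --- the discrepancy estimate of Theorem~\ref{thm1} holds simultaneously at every $N \geq 2$.

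It then remains only to absorb the $N$-dependence of $C(q,\varepsilon_N)$ into an explicit $\log N$ factor. From $C(q,\varepsilon) \asymp \log \varepsilon^{-1}$ we obtain $C(q,\varepsilon_N) \leq K(q)\bigl(2\log N + \log(\pi^2/(6\delta))\bigr)$ for some $K(q)>0$, and since $\log N \geq \log 2$ for $N \geq 2$ this is in turn bounded by $C(q,\delta)\log N$ for a suitable constant $C(q,\delta)$ of order $\log \delta^{-1}$. Substituting into the previous display yields the claimed bound uniformly in $N$. The only point requiring care is the bookkeeping of constants in that last step; the argument itself is just the textbook union-bound trick, made possible by the polynomial trade-off between the confidence parameter and the bound constant provided by Theorem~\ref{thm1}.
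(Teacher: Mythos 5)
Your proposal is correct and follows essentially the same route as the paper's own proof: both invoke Theorem~\ref{thm1} with per-$N$ failure probability $\varepsilon_N = 6\delta/(\pi^2 N^2)$, sum via a union bound to get total failure probability at most $\delta$, and then use $C(q,\varepsilon) \asymp \log\varepsilon^{-1}$ to absorb $\log\varepsilon_N^{-1} = 2\log N + \log(\pi^2/(6\delta))$ into a single $C(q,\delta)\log N$ factor. You have merely spelled out the last bookkeeping step, which the paper leaves implicit.
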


The proof of Corollary~\ref{cor1} will be presented in Section~\ref{sec_proof_cor1}.

\section{The proof of Theorem~\ref{thm1}}\label{sec_proof}

The proof of Theorem~\ref{thm1} is inspired by the techniques used in \cite{loeb}. The difficulty here is that we are concerned with polynomial arithmetic over finite fields instead of the usual integer arithmetic. 

Throughout the proof we tacitly assume that all components of $\bsf$ belong to the class of Laurent series $\sum_{k=1}^\infty g_k t^{-k}$ of $\overline{\ZZ}_q((t^{-1}))$ where $g_k\not=q-1$ for infinitely many $k$. Let $$\mathcal{C}=\{g \in \Zb \ : \ g_k=q-1 \mbox{ for all but finitely many $k\ge 1$}\}.$$ Note that $\mathcal{C}$ is a countable set and therefore $\mu(\mathcal{C})=0$.
We denote $\overline{\mathbb{Z}}_q^*((t^{-1})):=\overline{\mathbb{Z}}_q((t^{-1}))\setminus{\mathcal{C}}$. Hence we assume that $\bsf \in (\overline{\mathbb{Z}}_q^*((t^{-1})))^d$.

\subsection{Some auxiliary results}

As in \cite{aist11,loeb} the proof will be based on Bernstein's inequality for sums of independent random variables.

\begin{lem}[\cite{bern}, Bernstein inequality]\label{lem bernstein}
	Let $N \in \mathbb{N}$ and $X_1,\ldots ,X_N$ be independent random variables with $\mbb{E}(X_i)=0$ and $|X_i| \leq C$ for $i \in \{1,\ldots,N\}$
	and some $C>0$. Then we have for any $t>0$
$$
\mbb{P}\left(\left|\sum_{i=1}^{N}{X_i} \right| > t\right) \leq 2~\exp\left(-\dfrac{t^2}{2\sum_{i=1}^N{ \mbb{E}(X_i^2)} + \frac{2Ct}{3}}\right).$$
\end{lem}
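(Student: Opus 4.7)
The plan is to prove this by the standard Chernoff--Cram\'er exponential Markov argument. The only place where the constant $2/3$ in the denominator enters is a Bennett-style bound on the moment generating function of a bounded centered random variable, so the whole argument has three clean steps: an MGF bound, exponential Markov plus independence, and optimization over the free parameter.

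First I would fix a parameter $\lambda \in (0, 3/C)$ and, for each index $i$, bound $\mbb{E}(\mathrm{e}^{\lambda X_i})$ from above. Expanding the exponential as a Taylor series and using $\mbb{E}(X_i)=0$ to kill the linear term gives $\mbb{E}(\mathrm{e}^{\lambda X_i}) = 1 + \sum_{k\ge 2}\lambda^k \mbb{E}(X_i^k)/k!$. Since $|X_i|\le C$, I have $|\mbb{E}(X_i^k)| \le C^{k-2}\mbb{E}(X_i^2)$ for all $k\ge 2$, and the elementary inequality $k! \ge 2\cdot 3^{k-2}$ (valid for $k\ge 2$) lets me dominate the tail by a geometric series, yielding
\begin{align*}
\mbb{E}(\mathrm{e}^{\lambda X_i}) \le 1 + \frac{\lambda^2 \mbb{E}(X_i^2)/2}{1-\lambda C/3} \le \exp\!\left(\frac{\lambda^2 \mbb{E}(X_i^2)/2}{1-\lambda C/3}\right),
\end{align*}
where the last step uses $1+x\le \mathrm{e}^x$.

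Second, applying Markov's inequality to $\mathrm{e}^{\lambda\sum_{i} X_i}$ and using independence to factor the MGF gives
\begin{align*}
\mbb{P}\!\left(\sum_{i=1}^N X_i > t\right) \le \mathrm{e}^{-\lambda t}\prod_{i=1}^N \mbb{E}(\mathrm{e}^{\lambda X_i}) \le \exp\!\left(-\lambda t + \frac{\lambda^2 S/2}{1-\lambda C/3}\right),
\end{align*}
where $S:=\sum_{i=1}^N\mbb{E}(X_i^2)$. The choice $\lambda = t/(S+Ct/3)$ lies in $(0,3/C)$ and, after a short calculation, makes the exponent collapse to exactly $-t^2/(2S+2Ct/3)$, which is the one-sided tail bound with the precise constant stated in the lemma.

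Finally, the same argument applied to the variables $-X_1,\ldots,-X_N$ (which satisfy the same hypotheses, with the same $C$ and the same second moments) yields the matching bound on $\mbb{P}(\sum_i X_i < -t)$, and a union bound over the two tails produces the factor $2$ in front of the exponential. I do not expect a genuine obstacle here, since this is the classical Bernstein inequality and the authors will most likely simply refer to \cite{bern}; the only delicate point is the choice of the constant $3$ in $k!\ge 2\cdot 3^{k-2}$, which is precisely what pins down the $2/3$ in the variance-plus-range denominator.
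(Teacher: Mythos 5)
Your proof is correct: the MGF bound via $|\mbb{E}(X_i^k)|\le C^{k-2}\mbb{E}(X_i^2)$ together with $k!\ge 2\cdot 3^{k-2}$, the Chernoff--Markov step using independence, the choice $\lambda=t/(S+Ct/3)$ which makes the exponent collapse exactly to $-t^2/(2S+2Ct/3)$, and the union bound over the two tails are all standard and carried out correctly. Note, however, that the paper does not prove this lemma at all---it simply cites Bernstein's monograph \cite{bern} and uses the inequality as a black box---so there is nothing in the paper's argument to compare against; your write-up supplies the classical proof that the reference is standing in for.
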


Another very important tool in our analysis are bracketing covers whose definition is recalled below. As usual, for $\bsa=(a_1,\ld,a_d)$ and $\bsb=(b_1,\ld,b_d)$ in $[0,1]^d$ we write $\bsa \leq \bsb$ if and only if $a_i \leq b_i$ for all $i \in \{1,\ld, d\}$.

\begin{defi}\rm
Let $\delta >0$. A subset $\tau \subseteq [0,1]^d \times [0,1]^d$ is called a \emph{$\delta$-bracketing cover} if for every $\bsx\in [0,1]^d$ there exists $(\bsv,\bsw) \in \tau$ such that $\bsv \leq \bsx \leq \bsw$ and $\leb([\bo{0},\bsw) \backslash [\bo{0},\bsv))\leq \delta$.
\end{defi}

The following result about the number of elements of a $\delta$-bracketing cover is due to Gnewuch:

\begin{lem}[Gnewuch {\cite[Theorem 1.15]{gnew}}]
	For any $d\in \mbb{N}$ and any $\delta>0$ there exists a $\delta$-bracketing cover $\tau$ with 
	\begin{center}
		$|\tau| \leq \frac{1}{2}(2 {\rm e})^d(\delta^{-1} + 1)^d$\,.
	\end{center}
\end{lem}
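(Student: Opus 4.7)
The plan is to construct $\tau$ explicitly as the set of cells of a tensor-product grid on $[0,1]^d$ and to bound $|\tau|$ via the number of grid points per coordinate. First I would fix an integer $M$ and a partition $0 = t_0 < t_1 < \cdots < t_M = 1$ of $[0,1]$ chosen so that the anchored-volume function $t \mapsto t^d$ changes by at most $\delta$ on consecutive cells of the product grid. A natural candidate is the equi-anchored-volume spacing $t_k = (k/M)^{1/d}$ (for which $t_k^d - t_{k-1}^d = 1/M$); an alternative is a geometric-type progression $t_k = t_1 r^{k-1}$ for $k \ge 1$ with ratio $r = (1+\delta)^{1/d}$ and $t_1 \le \delta$ for the boundary cells. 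I then form the $d$-fold Cartesian product grid $\{t_0,\ldots,t_M\}^d$ and set
\begin{align*}
\tau = \{(\bsv_{\bsi-\bsone},\, \bsv_{\bsi}) : \bsi \in \{1,\ldots,M\}^d\}, \qquad \bsv_{\bsi} = (t_{i_1},\ldots,t_{i_d}).
\end{align*}
The covering property is automatic, since every $\bsx \in [0,1]^d$ lies in a unique cell, supplying a pair $(\bsv,\bsw) \in \tau$ with $\bsv \le \bsx \le \bsw$.

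The key verification is the $\delta$-volume condition
\begin{align*}
\lambda([\bszero,\bsv_{\bsi}) \setminus [\bszero,\bsv_{\bsi-\bsone})) = \prod_{k=1}^d t_{i_k} - \prod_{k=1}^d t_{i_k-1} \le \delta,
\end{align*}
which I would attack via the telescoping identity
\begin{align*}
\prod_{k=1}^d a_k - \prod_{k=1}^d b_k = \sum_{k=1}^d (a_k-b_k)\prod_{j<k} a_j\prod_{j>k} b_j,
\end{align*}
splitting cases on whether some $i_k = 1$. For interior cells (all $i_k \ge 2$), the geometric-ratio choice yields $\prod t_{i_k}/\prod t_{i_k-1} \le r^d = 1+\delta$, so the difference is at most $\delta\prod t_{i_k-1} \le \delta$. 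For boundary cells, $\prod t_{i_k-1} = 0$ and the difference reduces to $\prod t_{i_k} \le t_1 \le \delta$. It then remains to count: $|\tau| \le M^d$, and calibrating $M \le 2\mathrm{e}(\delta^{-1}+1)$ gives the stated bound up to the factor $\tfrac12$, which I expect to recover by a boundary-cell reduction that pairs degenerate cells (some $i_k = 1$) with their interior neighbors, or via an equivalent inclusion/parity argument in the final count.

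\paragraph{Main obstacle.}
The subtlety lies in reaching $M = O(\delta^{-1})$ rather than the $M = O(d\log(\delta^{-1})/\delta)$ forced by a pure geometric progression. Indeed, the constraints $r = (1+\delta)^{1/d}$ and $r^{M-1} \ge 1/\delta$ together yield $M \gtrsim d\log(\delta^{-1})/\log(1+\delta)$, which exceeds the target by a factor of roughly $d\log(\delta^{-1})$. To hit the constant $2\mathrm{e}$ one must either use a sharper one-dimensional partition that exploits the specific power-law shape of $t \mapsto t^d$ (for instance the equivolume grid together with a cell-by-cell estimate based on $(1+x/n)^n \le \mathrm{e}^x$), or else abandon the uniform tensor structure in favour of an adaptive multi-resolution grid in which the level sets $\{\bsx : \prod x_k \approx \text{const}\}$ are discretized at scales that shrink geometrically as the anchored volume approaches $1$. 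Throughout, the degenerate boundary cells---where some coordinate equals $0$ and the anchored volume jumps discontinuously---are the recurring technical point driving the condition $t_1 \le \delta$ and requiring a separate argument from the interior case.
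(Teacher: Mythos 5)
The paper quotes this lemma from Gnewuch \cite{gnew} without proof, so there is no in-paper argument to compare against; the question is whether your proposal constitutes a proof, and it does not, for reasons you yourself partially identify. You build $\tau$ from a tensor-product grid $\{t_0,\ldots,t_M\}^d$ and correctly observe that this cannot achieve $M=O(\delta^{-1})$: the geometric spacing with $r=(1+\delta)^{1/d}$ and $t_1\le\delta$ forces $M\gtrsim d\log(\delta^{-1})/\delta$. The equi-volume grid $t_k=(k/M)^{1/d}$ does not rescue the argument either: the cell with index vector $(M,1,\ldots,1)$ has anchored-volume increment $t_M\,t_1^{d-1}-0 = M^{-(d-1)/d}$, which exceeds $1/M$ for every $M\ge 2$, so that cell already violates the $\delta$-condition, and no ``cell-by-cell estimate'' can repair a cell that is simply too large. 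The structural obstruction is that a single one-dimensional partition must simultaneously resolve the anchored volume near the diagonal and near the boundary slabs where one coordinate is close to $1$ while others are small; a uniform tensor grid cannot do both at the stated cardinality.

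Gnewuch's construction therefore abandons the product structure altogether: it is built layer by layer along one coordinate, with the resolution used in the remaining coordinates chosen adaptively as a function of the position in the current one, so that the anchored-volume increment stays uniformly below $\delta$; the constants $2{\rm e}$ and $\tfrac{1}{2}$ emerge from summing the resulting recursion over the dimension $d$, not from a boundary-cell pairing or parity trick. This is precisely the ``adaptive multi-resolution grid'' you gesture at in your final paragraph, but the proposal names the idea without carrying it out. Your diagnosis of why tensor grids fail is accurate; the genuine gap is that the construction you actually produce does not satisfy the bracketing bound, and the adaptive alternative that would is left as a suggestion rather than a completed step.
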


From this result L\"obbe deduced the following corollary:

\begin{cor}[L\"obbe {\cite[Corollary 2.3]{loeb}}]\label{Cor delta brack}
	Let $d,h \in \mbb{N}$ and $q \geq 2$, then there exists a $q^{-h}$-bracketing cover $\tau_h$ with 
	\begin{enumerate}
		\item $|\tau_h|\leq \frac{1}{2} (2{\rm e})^d(q^{h+2} +1)^d$, and
		\item for $(\bsv,\bsw) \in \tau_h$ and every $i \in \{1, \ld, d\}$ there exist $a_i \in \{0,1, \ld, q^{h+1+\lo}\}$ and $b_i \in \{0,1,\ld,q^{h+2+\lo}\}$ such that
			$$v_i= \frac{a_i}{q^{h+1+ \lceil \log_q d \rceil }} \ \ \ \mbox{ and } \ \ \ w_i= \frac{w_i}{q^{h+2+\lceil \log_qd \rceil }}.$$
	\end{enumerate}
\end{cor}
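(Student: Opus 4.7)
The plan is to obtain $\tau_h$ by first applying Gnewuch's lemma with a suitably chosen parameter and then rounding the endpoints of every bracket onto the two prescribed rational grids. Concretely, I would invoke Gnewuch's lemma with $\delta=q^{-h-2}$ to produce an initial $q^{-h-2}$-bracketing cover $\tau'$ whose cardinality is already at most $\tfrac{1}{2}(2{\rm e})^d(q^{h+2}+1)^d$, matching the size bound in item~(1).

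For each $(\bsv,\bsw)\in\tau'$ and each coordinate $i$, write $\ell:=\lceil \log_q d\rceil$ and define $v_i'$ to be the largest element of the coarser grid $\{a/q^{h+1+\ell}:a\in\{0,1,\ldots,q^{h+1+\ell}\}\}$ with $v_i'\le v_i$, and $w_i'$ to be the smallest element of the finer grid $\{b/q^{h+2+\ell}:b\in\{0,1,\ldots,q^{h+2+\ell}\}\}$ with $w_i'\ge w_i$. Setting $\tau_h:=\{(\bsv',\bsw'):(\bsv,\bsw)\in\tau'\}$ ensures property~(2) by construction, keeps $|\tau_h|\le|\tau'|$ (giving~(1)), and preserves the bracketing property because $\bsv'\le\bsv\le\bsx\le\bsw\le\bsw'$ whenever $\bsx$ is bracketed by $(\bsv,\bsw)$.

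The main obstacle is verifying the refined measure condition $\leb([\bo{0},\bsw')\setminus[\bo{0},\bsv'))\le q^{-h}$, since the rounding has enlarged the bracket. I would split
\[
\prod_i w_i'-\prod_i v_i'=\Bigl(\prod_i w_i'-\prod_i w_i\Bigr)+\Bigl(\prod_i w_i-\prod_i v_i\Bigr)+\Bigl(\prod_i v_i-\prod_i v_i'\Bigr),
\]
where the middle summand is at most $q^{-h-2}$ by the initial bracketing property. The elementary lemma I would use for the flanking summands, provable by a short induction on $d$, is that $\prod_i(a_i+\eta_i)-\prod_i a_i\le\sum_i\eta_i$ whenever $a_i,\eta_i\ge 0$ and $a_i+\eta_i\le 1$. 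Applied with the rounding errors $w_i'-w_i\le q^{-h-2-\ell}$ and $v_i-v_i'\le q^{-h-1-\ell}$, and using $d\le q^\ell$, the two flanking summands are bounded by $dq^{-h-2-\ell}\le q^{-h-2}$ and $dq^{-h-1-\ell}\le q^{-h-1}$ respectively. Summing yields $\prod_i w_i'-\prod_i v_i'\le 2q^{-h-2}+q^{-h-1}=q^{-h}(2/q^2+1/q)\le q^{-h}$ for every integer $q\ge 2$, with equality at $q=2$; this identifies $\delta=q^{-h-2}$ as the natural (and essentially sharp) initial choice.
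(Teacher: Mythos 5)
Your proposal is correct, and it reproduces the intended argument: invoke Gnewuch's lemma at the finer scale $\delta=q^{-h-2}$ (which gives precisely the cardinality bound in item (1)), then round $v_i$ down to the $q^{-(h+1+\lceil\log_q d\rceil)}$-grid and $w_i$ up to the $q^{-(h+2+\lceil\log_q d\rceil)}$-grid, and control the additional volume via the telescoping decomposition and the elementary inequality $\prod_i(a_i+\eta_i)-\prod_i a_i\le\sum_i\eta_i$. Your arithmetic is right, including the observation that $d\le q^{\lceil\log_q d\rceil}$ makes the two rounding contributions at most $q^{-h-2}$ and $q^{-h-1}$, so the total is $q^{-h}(2/q^2+1/q)\le q^{-h}$ with equality at $q=2$; this is the same approach as the cited reference (L\"obbe, Corollary~2.3), where the corollary is deduced from Gnewuch's bracketing-cover bound in exactly this fashion.
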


%	Observe that the equivalence does not hold for $\tb{p} \in \Zb$. It is not hard to verify that $\tb{p} \in \Zb \backslash \mathcal{C}$ is a necessary and sufficient
%	condition for this equivalence.

\subsection{Preliminaries}

Let $N,d \in \mbb{N}$ and fix some $H \in \mbb{N}$. For $h \in \{1,\ld,H\}$ let $\tau_h$ be a $q^{-h}$-bracketing cover of $[0,1)^d$ with elements described as in Corollary \ref{Cor delta brack}. Let $\bsy\in [0,1)^d$.
We are going to define inductively a finite sequence of points $\bsbeta_h(\bsy) \in [0,1)^d$ for $h \in \{0, \ld , H+1\}$ in the following way:

\begin{enumerate}

	\item Let $\bsbeta_H(\bsy), \bsbeta_{H+1}(\bsy) \in [0,1)^d$ with $\bsbeta_H(\bsy) \leq \bsy \leq \bsbeta_{H+1}(\bsy)$ and $(\bsbeta_H(\bsy), 
		\bo{\be}_{H+1}(\bsy)) \in \tau_H$.
	
	\item For $h \in \{1,\ld,H-1\}$ let $\bsbeta_h(\bsy) \in [0,1)^d$ be such that there exists a point $\tb{w} \in [0,1)^d$ with $\bsbeta_h(\bsy) \leq 
		\bsbeta_{h+1}(\bsy) \leq \tb{w}$ and $(\bsbeta_h(\bsy),\tb{w}) \in \tau_h$.
	
	\item Set $\bsbeta_0(\bsy)=\bszero=(0,\ldots,0)$, the $d$-dimensional zero-vector.
	
	\item Additionally we choose the points $\be_h$ such that the following property is fulfilled. For $\tb{x},\bsy \in [0,1)^d$ and $h \in \{0, \ld ,H-1\}$ we have that
		\begin{center}
			$\bsbeta_{h+1}(\bsy) = \be_{h+1}(\tb{x}) \Rightarrow \be_{h}(\bsy) = \be_{h}(\tb{x})$\,.
		\end{center}
	
\end{enumerate}
Note that the sequence of points $\be_h(\bsy)$ is well defined for $h\in \{0,\ld,H+1\}$ since we choose $\tau_h$ to be a $q^{-h}$-bracketing cover.
For $\bsy \in [0,1)^d$ we observe the following properties for the finite sequence $\be_h(\bsy)$:\\
We have

\begin{enumerate}
	\item $\bszero=\be_0(\bsy) \leq \be_1(\bsy) \leq \cdots \leq \be_H(\bsy) \leq \bsy \leq \be_{H+1}(\bsy) \leq \bsone$; \label{prop 1}
	
	\item for all $h \in \{0,\ld ,H-1\}$ there exists $\tb{w} \in [0,1)^d$ such that $\be_h(\bsy) \leq \be_{h+1}(\bsy) \leq \tb{w}$ and $(\be_h(\bsy),\tb{w}) 
		\in \tau_h$. \label{prop 2} Additionally we have that $(\be_H(\bsy),\be_{H+1}(\bsy)) \in \tau_H$;
				
	\item for all $h \in \{0,\ld , H\}$ and $i\in \{1, \ld ,d\}$ we have that \label{prop 3}
				\begin{center}
					$(\be_h(\bsy))_i = q^{-(h+1 + \lo)}a_{h,i}$ 
				\end{center}
				and 
				\begin{center}
					$(\be_{H+1}(\bsy))_i= q^{-(H+2 + \lo)}b_{H+1,i}$ 
				\end{center}
				for $a_{h,i} \in \left\{0,1,\ld,q^{h+1+\lo} \right\}$ and $b_{H+1,i} \in \left\{0,1,\ld,q^{H+2+\lo} \right\}$.
\end{enumerate}
The properties \ref{prop 1}. and \ref{prop 2}. are an immediate consequence of the definition of the $\be_h(\bsy)$ and property \ref{prop 3}. follows directly
by Corollary \ref{Cor delta brack}.

Moreover, for $h \in \{0,\ld, H\}$ we define $K_h(\bsy):=[\bszero,\be_{h+1}(\bsy)) \backslash [\bszero,\be_{h}(\bsy))$ and observe that the $K_h(\bsy)$ are pairwise disjoint sets and by the definition respectively property 2. of $\be_h(\bsy)$ we obtain

\begin{align}\label{union Kh}
\bigcup_{h=0}^{H-1} K_h(\bsy) \subseteq [\bszero,\bsy) \subseteq \bigcup_{h=0}^{H}K_h(\bsy) \ \ \ \text{ and }\ \ \ \lambda(K_h(\bsy)) \leq q^{-h}.
\end{align}

Finally for $\{0,\ld,H\}$ define $S_h:= \left\{K_h(\bsy) ~:~ \bsy \in [0,1)^d \right\}$. Note that by definition of the $\be_h(\bsy)$ and Corollary $\ref{Cor delta brack}$ we have

\begin{align*}%\label{bound Sh}
\left|S_H \right|= \left|\left\{(\be_{H}(\bsy),\be_{H+1}(\bsy)) ~:~ \bsy \in [0,1)^d \right\} \right| \leq |\tau_{H}| \leq \frac{1}{2}(2{\rm e})^d(q^{H+2} + 1)^d.
\end{align*}
With point 4. in the definition of the $\be_h(\bsy)$ we get for $h\in \{0,\ld,H-1\}$ that

\begin{align*}
\left|S_h\right| =\left| \left\{ \be_{h+1}(\bsy) \ :\ \bsy \in [0,1)^d \right\} \right| \leq \left|\tau_{h+1} \right| \leq \frac{1}{2}(2e)^d(q^{h+3} + 1)^d.
\end{align*}

Fix $\bsy \in [0,1)^d$. In order to simplify the notation from now on we will write $\be_h$ and $K_h$ instead of $\be_h(\bsy)$ and $K_h(\bsy)$, respectively . Then by \eqref{union Kh} we get that

\begin{align}\label{sum geq}
	\sum_{n=1}^N{\tb{1}_{[\bszero,\bsy)}(\tb{x}_n)} \geq \sum_{n=1}^N{\tb{1}_{[\bszero,\be_H)}(\tb{x}_n)} = \sum_{h=0}^{H-1}{\sum_{n=1}^N{\Big(\tb{1}_{K_h}(\tb{x}_n) - \leb(K_h)\Big)}} + 
	N\sum_{h=0}^{H-1}{\leb(K_h)}
\end{align}
and
\begin{align}\label{sum leq}
	\sum_{n=1}^N{\tb{1}_{[\bszero,\bsy)}(\tb{x}_n)} \leq \sum_{n=1}^N{\tb{1}_{[\bszero,\be_{H+1})}(\tb{x}_n)} = \sum_{h=0}^{H}{\sum_{n=1}^N{\Big(\tb{1}_{K_h}(\tb{x}_n) - \leb(K_h)\Big)}} + 
	N\sum_{h=0}^{H}{\leb(K_h)}.
\end{align}
Let us define the functions $\D:[0,1)^d \rightarrow [-1,1]$, $\D(\tb{x}):=\tb{1}_{K_h}(\tb{x})-\leb(K_h)$ for $h \in \{0,\ld, H\}$. A crucial step for the proof of the main result will be to use Bernstein's inequality to give a lower bound on the probability that the inequality 

\begin{align*}
	\left|\sum_{n=1}^N{\D(\tb{x}_n)}\right| \leq t_h
\end{align*}
holds simultaneously for all $h \in \{0, \ld ,H\}$ and for some $t_h>0$ to be specified later. First of all observe that $\mbb{E}(\D(\tb{x}_n))=0$, $\mbb{E}^2(\D(\tb{x}_n))=\leb(K_h)(1-\leb(K_h))$ and $\left| \D(\tb{x}_n) \right| \leq 1$ for all $h\in \{0,\ld, H\}$ and $n\in \{1,\ld ,N\}$. Unfortunately for $h \in \{0,\ld,H\}$ the random variables $\D(\tb{x}_1),\D(\tb{x}_2),\ld, \D(\tb{x}_N)$ are not independent in general. We will see how to overcome this problem in the next section.

\subsection{Independence of $\D(x_n)$}

Before we begin we point out the following easy algebraic characterization of Laurent series whose image under $\phi$ belongs to a certain type of intervals: for $p \in \overline{\mathbb{Z}}_q^*((t^{-1}))$ of the form $p=p_1 t^{-1}+p_2 t^{-2}+p_3 t^{-3}+\cdots$, for $r\in \mbb{N}$ and $k \in \{0,\ld,q^{r}-1\}$ with $q$-adic expansion $k=k_0+k_1 q+\cdots +k_{r-1} q^{r-1}$ we have that 
\begin{flalign*}
\phi(p) \in \left[\frac{k}{q^{r}},\frac{k+1}{q^{r}}\right)  \ \Leftrightarrow \  p_1=k_{r-1},\ p_2=k_{r-2},\ \ldots,\ p_{r} =k_0.
\end{flalign*}

Throughout the proof the underlying probability measure is the measure $\mu_d$ from Definition~\ref{def_meas}. However, out of habit we will in the following denote the probability by $\mathbb{P}$.

\begin{lem}\label{independence}
	Let $\kappa_h := \log_2(h+2+\lceil \log_qd \rceil)$ and let $\gamma \in \{0, \ldots , 2^{\kappa_{h}}-1\}$. Moreover, let
	\begin{align*}
		Q(N,\kappa_h,\gamma):=\left\{n \in \{1,\ldots,N\} ~:~ n \equiv \gamma \pmod{2^{\kappa_h}}\right\}.
	\end{align*}
	Then for $n_1,\ld,n_l \in Q(N,\kappa_h,\gamma)$ and $l \in \{1,\ld, |Q(N,\kappa_h,\gamma)|\}$ the random variables $\D(\tb{x}_{n_1}), \D(\tb{x}_{n_2}) ,\ld ,\D(\tb{x}_{n_l})$ are independent, i.e.
	
	\begin{align*}
		\mbb{P}(\D(\tb{x}_{n_1})=c_1 ,\ld , \D(\tb{x}_{n_l})=c_l) = \prod_{r=1}^l\mbb{P}(\D(\tb{x}_{n_r}) = c_r)\,.
	\end{align*}
\end{lem}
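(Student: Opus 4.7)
The strategy is to reduce the independence claim to the fact that, under $\mu_d$, the base-$q$ digits of the coordinates of $\bsf$ are i.i.d.\ uniform on $\mbb{Z}_q$, and then to show that the random variables $\D(\bsx_{n_r})$ for different $n_r \in Q(N,\kappa_h,\gamma)$ depend on pairwise disjoint blocks of these digits.

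First I would record two purely algebraic facts. (a) By property 3 of the points $\be_h$, both endpoints of each interval $[(\be_h)_i,(\be_{h+1})_i)$ lie on the grid $q^{-R_h}\mbb{Z}$ with $R_h := h+2+\lo$, so after excluding the $\mu$-null set $\mathcal{C}$ (which eliminates the boundary ambiguity of $q$-adic expansions), membership of a point in $K_h=[\bszero,\be_{h+1})\setminus[\bszero,\be_h)$ is determined entirely by the first $R_h$ base-$q$ digits of each of its $d$ coordinates. (b) Writing $f_j=\sum_{k\ge 1}g_{j,k}t^{-k}$, a direct computation of $t^{n-1}f_j$ followed by taking the fractional part gives $\{t^{n-1}f_j\}=\sum_{m\ge 1}g_{j,n-1+m}\,t^{-m}$, so the $m$-th base-$q$ digit of the $j$-th coordinate of $\bsx_n=\phi(\{t^{n-1}\bsf\})$ equals $g_{j,n-1+m}$. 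Combining (a) and (b), $\D(\bsx_n)$ is a deterministic function of the finite digit block $B_n := (g_{j,k})_{1\le j\le d,\ n\le k\le n+R_h-1}$.

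Next I would invoke the congruence condition: for distinct $n_r,n_s\in Q(N,\kappa_h,\gamma)$ one has $|n_r-n_s|\ge 2^{\kappa_h}\ge R_h$ (which is precisely what the choice $\kappa_h=\log_2(h+2+\lo)$ is designed to provide), and this gap forces the index windows $[n_r,n_r+R_h-1]$ to be pairwise disjoint. Under $\mu_d$ the entire family $(g_{j,k})_{1\le j\le d,\ k\ge 1}$ is i.i.d.\ uniform on $\mbb{Z}_q$, so the blocks $B_{n_1},\ldots,B_{n_l}$ are mutually independent; since each $\D(\bsx_{n_r})$ is a function of $B_{n_r}$ alone, the claimed factorization of the joint probability is an immediate consequence of the standard fact that functions of disjoint independent families of random variables are themselves independent.

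The only real obstacle is the bookkeeping in step (a): one has to confirm carefully that, after removing $\mathcal{C}$, the strict inequalities defining $[\bszero,\be_{h+1})\setminus[\bszero,\be_h)$ really are resolved coordinatewise by the first $R_h$ digits alone (no later digits can change the outcome once ties on $q^{-R_h}\mbb{Z}$ are ruled out). The gap argument using $\kappa_h$ is then essentially immediate, as is the passage from digit-independence to independence of the $\D(\bsx_{n_r})$.
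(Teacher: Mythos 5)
Your proof is correct and takes essentially the same approach as the paper. Both arguments rest on the two facts you isolate: (i) after removing the null set $\mathcal{C}$, membership of $\phi(p)$ in $K_h$ is determined by the first $R_h=h+2+\lo$ base-$q$ digits of each coordinate of $p$ (the paper's Claim~2); and (ii) the $r$-th base-$q$ digit of $\bsx_{n,j}$ is $g_{j,n-1+r}$, so that $\D(\bsx_{n_1}),\ldots,\D(\bsx_{n_l})$ depend on pairwise disjoint windows of the digit array of $\bsf$ once $n_r$ are spaced at least $2^{\kappa_h}=R_h$ apart. The only difference is presentational: you close the argument by invoking the standard fact that the $\mu_d$-digits $(g_{j,k})$ are i.i.d.\ uniform on $\ZZ_q$, so functions of disjoint digit blocks are independent, whereas the paper proves this fact by hand via the cylinder sets $B_A$ and an explicit conditional-probability chase (its Claims~1, 3, 4), which makes the argument more self-contained but considerably longer.
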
 

\begin{proof}
        The proof is based on the ideas from \cite{loeb}.
	We will show the case $l=2$. The general case follows by induction. Let $h\in \{0,\ld,H\}, ~\gamma \in \{0,\ld,2^{\kappa_h}-1\}$ and $n,m \in 
	Q(N,\kappa_h,\gamma)$ with $n > m$. We want to show that $\Delta_{K_h}(\tb{x}_n), \Delta_{K_h}(\tb{x}_m)$ are independent.  To this end we consider the following decomposition of $[0,1)^d$:
	\begin{align*}
		\Sigma_{n-1}:=\left\{ \prod_{i=1}^{d} \left[\frac{a_i}{q^{n-1}},\frac{a_i+1}{q^{n-1}}\right) \ : \ a_i \in \{0,\ld,q^{n-1}-1\} \right\}.
	\end{align*}
	Since the underlying structure of the sequence $(\tb{x}_k)_{k\geq1}$ is $\Zb$ we are considering the preimage of $\Sigma_{n-1}$. %But for technical reasons we have to exclude in each coordinate the set $\mathcal{C}$ defined as $\mathcal{C}:=\bigcup_{n\geq1}C_n$ and  $C_n:=\left\{G \in \Zb \ : \  g_i=q-1 \ \mbox{ for all } \ i \geq n  \right\}$. So we will consider the following set
	\begin{align*}
		\Lambda_{n-1}:=\left\{ \phi^{-1}(S ) \cap \left(\overline{\mathbb{Z}}_q^*((t^{-1}))\right)^d \ : \  S\in \Sigma_{n-1} \right\}, 
	\end{align*}
	\noindent

  For $A=(a_{i,j})_{i=1,j=1}^{d,n-1} \in \mbb{Z}_q^{d\times (n-1)}$ let us define
	\begin{align*}
        B_A:=\prod_{i=1}^d \left\{g \in \overline{\mathbb{Z}}_q^*((t^{-1})) \ : \ (g_1,\ld,g_{n-1})=(a_{i,1},\ld,a_{i,n-1}) \right\}.
	\end{align*}
	\noindent
	where $(a_{i,1},\ld,a_{i,n-1})$ is the $i$-th row of $A$. One can easily check that 
	\begin{align*}
	\Lambda_{n-1}=\left\{B_A \ : \ A\in \mbb{Z}_q^{d\times (n-1)}\right\}.
	\end{align*}
	
	\noindent
	For matrices $A_1,A_2 \in \mbb{Z}_q^{d \times (n-1)}$, $A_j=(a_{j,i,k})_{i=1,k=1}^{d,n-1}$ for $j \in \{1,2\}$ we define
	\begin{align*}
	\alpha_{A_1,A_2} : \  & (\Zb)^d \ra (\Zb)^d, \\
	 &(g^{(1)},\ld,g^{(d)}) \mapsto (g^{(1)}+u_{A_1A_2}^{(1)},\ld,g^{(d)}+u_{A_1A_2}^{(d)}),
	\end{align*}
	where for $i \in \{1,\ldots,d\}$, $u_{A_1 A_2}^{(i)}=\sum_{k=1}^{\infty} u_{A_1A_2,k}^{(i)}t^{-k} \in \Zb$ and 
	$$u_{A_1A_2,k}^{(i)} = 
		\begin{cases}
			a_{2,i,k}-a_{1,i,k}  & \mbox{ if } 1\leq k\leq n-1,\\
			0 & \mbox{ if } k>n-1.
		 \end{cases}$$
		 
	With this definition we have $$\alpha_{A_1,A_2}(B_{A_1})=B_{A_2}.$$

	Before we can prove the independence of $\Delta_{K_h}(\tb{x}_n)$ and $\Delta_{K_h}(\tb{x}_m)$ we need to show four claims:
	
	\begin{claim}\label{cl1}
		Let $c\in \mbb{R}$, $A_1,A_2 \in \Zm$, $\tb{f} \in B_{A_1}$ and $(\bsy_n)_{n \geq1}$ in $[0,1)^d$ with $\bsy_n=\phi(\{t^{n-1} \overline{\bsf}\})$ 
		%$y_{k,i}=\phi(\{t^{k-1}\overline{f}^{(i)}\})$ 
                with $\overline{\tb{f}} =\alpha_{A_1A_2}(\tb{f})$. Then we have that
		\begin{align*}
			\mbb{P}\big(\Delta_{K_h}(\tb{x}_n) = c ~|~ \tb{f} \in B_{A_1})=\mbb{P}(\Delta_{K_h}(\bsy_n)=c ~|~ \overline{\tb{f}} \in B_{A_2}\big)\,.
		\end{align*}
	\end{claim}

	\noindent
	{\it Proof of Claim~\ref{cl1}:}
	For $i \in \{1,\ld,d\}$ we have that
	$$y_{n,i}=\phi(\{t^{n-1}\overline{f}^{(i)}\}) = \phi(\{t^{n-1}f^{(i)} + t^{n-1}u_{A_1A_2}^{(i)})\}) = \phi(\{t^{n-1}f^{(i)}\})=x_{n,i}.$$
	Note that the second last equality is true because $u_{A_1A_2,k}^{(i)}=0$ for $k \geq n$.
	Additionally it holds that $\tb{f} \in B_{A_1} \Leftrightarrow \overline{\tb{f}} \in B_{A_2}$. Therefore the claim follows.\qed

	\begin{claim}\label{coefficients}
	Let $h \in \{0,\ld,H\}$ and $\tb{p}=(p^{(1)},\ld,p^{(d)}) \in (\overline{\mathbb{Z}}_q^*((t^{-1})))^d$ with $p^{(i)} = \sum_{j =1}^{\infty} p^{(i)}_j t^{-j}$. Then the  $d(h+2+\lo)$ coefficients $p^{(i)}_1, \ld ,p^{(i)}_{h+2+\lo}$ for $i \in \{1,\ldots,d\}$ determine if $\phi(\tb{p})\in K_h$.
	\end{claim}

	\noindent
	{\it Proof of Claim~\ref{coefficients}:}
	For $\tb{p}=(p^{(1)},\ld,p^{(d)}) \in (\overline{\mathbb{Z}}_q^*((t^{-1})))^d$, we have that 
	\begin{equation}\label{help}
		\begin{aligned}[b]
			\phi(\tb{p}) \in K_h  &\Leftrightarrow \phi(\tb{p}) \in [\bszero,\be_{h+1}) \backslash [\bszero,\be_h)\\ 
			%	&\Leftrightarrow \forall i \in \{1,\ld,d\} : \phi(p^{(i)})  \in [0,\beta^{(i)}_{h+1}) \text{ and }\\ 
			%	& \qquad \exists j\in \{1,\ld,d\}: \phi(p^{(j)}) \not\in [0,\beta_{h}^{(j)})\\
				&\Leftrightarrow \forall i \in \{1,\ld,d\} : \phi(p^{(i)})  \in [0,\beta^{(i)}_{h+1}) \text{ and }\\ 
				& \qquad \exists j\in \{1,\ld,d\}: \phi(p^{(j)}) \in [\beta_{h}^{(j)},1),                              
		\end{aligned}
	\end{equation}
	where $\be_{h+1}=(\beta_{h+1}^{(1)},\ldots \beta_{h+1}^{(d)})$ with $$\beta_{h+1}^{(i)}= \frac{b_i}{q^{h+2+\lo}} \ \ \mbox{ for some }\ \ b_i \in \{0,1,\ld, q^{h+2+\lo}-1\}$$ and similarly $\be_{h}=(\beta_{h}^{(1)},\ldots \beta_{h}^{(d)})$ with $$\beta_{h}^{(i)}= \frac{\bar{b}_i}{q^{h+1+\lo}} \ \ \mbox{ for some }\ \ \bar{b}_i \in \{0,1\ld, q^{h+1+\lo}-1\}.$$
        We can write $$[0,\beta^{(i)}_{h+1})=\bigcup_{k=0}^{b_i-1} \left[\frac{k}{q^{h+2+\lo}},\frac{k+1}{q^{h+2+\lo}}\right).$$ Hence $\phi(p^{(i)})  \in [0,\beta^{(i)}_{h+1})$ if and only if there exists a $k \in \{0,\ldots,b_i-1\}$ such that $$\phi(p^{(i)}) \in \left[\frac{k}{q^{h+2+\lo}},\frac{k+1}{q^{h+2+\lo}}\right).$$ Since $\phi(p^{(i)})=\sum_{j=1}^{\infty} p^{(i)}_j q^{-j}$ the last condition is satisfied if and only if $$p^{(i)}_1=k_{h+1+\lo}, p^{(i)}_2=k_{h+\lo},\ldots, p^{(i)}_{h+2+\lo} =k_0,$$ whenever $k$ has $q$-adic expansion $k=k_0+k_1 q+\cdots +k_{h+1+\lo} q^{h+1+\lo}$.

        In the same vein we can write $$[\beta^{(j)}_h,1)=\bigcup_{\ell=\bar{b}_j}^{q^{h+1+\lo}-1} \left[\frac{\ell}{q^{h+1+\lo}},\frac{\ell+1}{q^{h+1+\lo}}\right).$$ Hence $\phi(p^{(j)})  \in [\beta^{(j)}_{h},1)$ if and only if there exists a $\ell \in \{\bar{b}_j,q^{h+1+\lo}-1\}$ such that $$\phi(p^{(j)}) \in \left[\frac{\ell}{q^{h+1+\lo}},\frac{\ell+1}{q^{h+1+\lo}}\right).$$ Since $\phi(p^{(j)})=\sum_{k=1}^{\infty} p^{(j)}_k q^{-k}$ the last condition is satisfied if and only if $$p^{(j)}_1=l_{h+\lo}, p^{(j)}_2=l_{h+\lo-1},\ldots, p^{(j)}_{h+1+\lo} =l_0,$$ whenever $\ell$ has $q$-adic expansion $\ell=l_0+l_1 q+\cdots +l_{h+\lo} q^{h+\lo}$.

	Together with \eqref{help} it follows that the coefficients $p^{(i)}_1, \ld ,p^{(i)}_{h+2+\lo}$ for $i \in \{1,\ldots ,d\}$ determine whether or not 
	$\phi(\tb{p})$ belongs to $K_h$. This proves the second claim.\qed \\

	Recall that $m \in Q(N,\kappa_h,\gamma)$ and $m < n$. Define 
	$$\delta_m:\Zb \ra \Zb,~p \mapsto \{t^{m-1}p\}.$$

	\begin{claim}\label{Delta constant}
	For all $h \in \{0,\ld,H\}$ and for all $A \in \Zm$ we have that $\Delta_{K_h}$ is constant on $\phi(\delta_m(B_A))$.
	\end{claim}
	
	\noindent
	{\it Proof of Claim~\ref{Delta constant}:}
	Let $\tb{p}=(p^{(1)},\ldots,p^{(d)}) \in B_A$ with $p^{(i)}=\sum_{j=1}^{\infty}p^{(i)}_j t^{-j}$. Note that for each $i \in \{1,\ld,d\}$ the first $n-1$ coefficients $p^{(i)}_1,\ld,p^{(i)}_{n-1}$ of $p^{(i)}$ are equal to the entries in the $i$-th row of $A$. Now we have 
	\begin{align*}
	\delta_m\left(\sum_{j=1}^{\infty}p^{(i)}_jt^{-j}\right)=\left\{\sum_{j=1}^{\infty}p^{(i)}_j t^{m-1-j}\right\}=\sum_{j=1}^{\infty}p^{(i)}_{m-1+j}t^{-j}.
	\end{align*}
        Because of Claim~\ref{coefficients}, the coefficients $p^{(i)}_{m},\ld,p^{(i)}_{m+h+1+\lo}$ for $i \in 
	\{1,\ld,d\}$ determine if $\phi(\delta_m(\tb{p})) \in K_h$. Since $n-m \geq h+2+ \lo$ these coefficients are fixed by the choice of $B_A$. Hence it follows that $\phi(\delta_m(B_A)) \cap K_h \in
	\{\emptyset, \phi(\delta_m(B_A))\}$. Therefore the function $\Delta_{K_h}(\boldsymbol{x})=\tb{1}_{K_h}(\boldsymbol{x})-\lambda(K_h)$ is constant on $\phi(\delta_m(B_A))$. This proves the claim. \qed
	\\

	Define for $c \in \mbb{R}$,
	\begin{align*}
		\Lambda_{K_h,c}:=\left\{B_A \in \Lambda_{n-1} \ : \ \Delta_{K_h}(\phi(\delta_m(B_A)))=c \right\}.
	\end{align*}
	Note that $\Lambda_{K_h,c}$ is well-defined according to Claim \ref{Delta constant}.
	
	\begin{claim}\label{final claim}
		Let $c \in \mbb{R}$ and $h \in \{0,\ld,H\}$. Then we have
		\begin{align*}
			\Delta_{K_h}(\tb{x}_m)=c \Leftrightarrow \exists B_A \in \Lambda_{K_h,c} \ \mbox{ such that }\  \tb{f} \in B_A.
		\end{align*}
	\end{claim}
	
	\noindent
	{\it Proof of Claim~\ref{final claim}:}
	Let $c\in \mbb{R}$ and suppose that there exists $B_A \in \Lambda_{K_h,c}$ such that $\tb{f} \in B_A$. Since $\tb{x}_m=\phi(\delta_m(\tb{f}))$ we have
	$$\Delta_{K_h}(\tb{x}_m) =\Delta_{K_h}(\phi(\delta_m(\tb{f}))).$$ Since $\delta_m(\tb{f}) \in \delta_m(B_A)$ we get that $\Delta_{K_h}(\tb{x}_m)=c$.
	
        Now assume that $\Delta_{K_h}(\tb{x}_m)=c$ which is equivalent to $\Delta_{K_h}(\phi(\delta_m(\tb{f})))=c$. Now there exists $A'\in \Zm$ such that $B_{A'} \in
	\Lambda_{K_h,c}$ and	$\delta_m(\tb{f}) \in \delta_m(B_{A'})$ and we get that	$(f^{(i)}_{m+1},\ld,f^{(i)}_{n-1})=(a'_{i,m+1},\ld,a'_{i,n-1})$ for $i\in\{1,\ld,d\}$.\\
	On the other hand %$\Lambda_{n-1}$ is a partition of $\Zb$. Therefore 
        there exists $A \in \Zm$ with $\tb{f}	\in	B_{A}$. We obtain 
	$(f^{(i)}_{m+1},\ld,f^{(i)}_{n-1})=(a_{i,m+1},\ld,a_{i,n-1})$ for $i \in \{1,\ld ,d\}$. Altogether
	we have that $$(a'_{i,m+1},\ld,a'_{i,n-1})=(a_{i,m+1},\ld,a_{i,n-1})\ \ \mbox{ for }\ \ i \in \{1,\ld,d\}.$$ Now it follows by Claim~\ref{coefficients} that
	\begin{align*}
	\Delta_{K_h}(\phi(\delta_m(B_{A})))=\Delta_{K_h}(\phi(\delta_m(B_{A'})))=c.
	\end{align*}
	This means that $B_{A} \in \Lambda_{K_h,c}$ and $\bsf\in B_{A}$ and this proves the claim.\qed
	\\

	Now we can prove the independence of $\Delta_{K_h}(\tb{x}_n)$ and $\Delta_{K_h}(\tb{x}_m)$ for $n,m \in 
	Q(N,\kappa_h,\gamma)$ with $n > m$. For $c\in \mbb{R}$ and $B_{A'} \in \Lambda_{n-1}$ we have
	\begin{align}\label{indep 1}
	\mbb{P}(\Delta_{K_h}(\tb{x}_n)=c) &=\sum_{B_A \in \Lambda_{n-1}}{\mbb{P}(\Delta_{K_h}(\tb{x}_n) = c ~|~ \tb{f} \in B_A)\mbb{P}(\tb{f} \in B_A)} \nonumber\\
		&=\mbb{P}(\Delta_{K_h}(\bsy_n)=c ~|~ \overline{\tb{f}} \in B_{A'}) \sum_{B_A \in \Lambda_{n-1}}{\mbb{P}(\tb{f} \in B_A)} \nonumber\\
		&=\mbb{P}(\Delta_{K_h}(\bsy_n)=c ~|~ \overline{\tb{f}} \in B_{A'}),
	\end{align}
	where we used Claim~\ref{cl1} with $A_1=A$ and $A_2=A'$. By Claim \ref{final claim} and \eqref{indep 1} we get for $c_1,c_2 \in \mbb{R}$ and $A^{\prime} \in \Zm$ that
        \begin{eqnarray*}
        \lefteqn{\mbb{P}(\Delta_{K_h}(\tb{x}_n)=c_2 ~|~ \Delta_{K_h}(\tb{x}_m)=c_1) = \frac{\mbb{P}(\Delta_{K_h}(\tb{x}_n)=c_2, \Delta_{K_h}(\tb{x}_m)=c_1)}{\mbb{P}(
	\Delta_{K_h}(\tb{x}_m)=c_1)}}\\
        	&\qquad =& \frac{\sum_{B_A \in \Lambda_{n-1}}{\mbb{P}(\Delta_{K_h}(\tb{x}_n)=c_2, \Delta_{K_h}(\tb{x}_m)=c_1 ~|~ \tb{f} \in B_A)\mbb{P}(\tb{f} \in B_A)}}
			{\mbb{P}(\Delta_{K_h}(\tb{x}_m)=c_1)}\\
		&\qquad =& \sum_{B{_A} \in \Lambda_{K_h,c_1}}{\mbb{P}(\Delta_{K_h}(\tb{x}_n)=c_2 ~|~ \tb{f} \in B_A)}\frac{\mbb{P}(\tb{f} \in B_A)}{\mbb{P}(\Delta_{K_h}(
			\tb{x}_m)=c_1)}\\
		&\qquad =& \mbb{P}(\Delta_{K_h}(\bsy_n) = c_2 ~|~ \overline{\tb{f}} \in B_{A^{\prime}}) \dfrac{\sum_{B_A \in \Lambda_{K_h,c_1}}\mbb{P}(\tb{f} \in B_A)}
			{\mbb{P}(\Delta_{K_h}(\tb{x}_m)=c_1)}\\
 		&\qquad =& \mbb{P}(\Delta_{K_h}(\bsy_n)=c_2 ~|~ \overline{\tb{f}} \in B_{A^{\prime}})\\
		&\qquad =& \mbb{P}(\Delta_{K_h}(\tb{x}_n) = c_2).
	\end{eqnarray*}
This implies the desired result.
\end{proof}

\subsection{Applying Bernstein's inequality and finalizing the proof of Theorem~\ref{thm1}}

We may assume that $N \ge d \log_q d$ since otherwise the discrepancy bound is trivial. First of all we set
\begin{align}\label{def H}
	H=\lc \frac{1}{2}\log_q\left(\frac{N}{d\log_q d}\right) \rc \in \mbb{N}.
\end{align}
With this choice we obtain 
\begin{align*}%\label{inequality 2pow-h}
	\dfrac{1}{q^H} \leq \sqrt{\dfrac{d\log_q d}{N}}\ \ \ \mbox{ and }\ \ \ q^{2H} \leq  q^2\frac{N}{d \log_qd}.
\end{align*}

Recall the definition of $Q(N,\kappa_h,\gamma)= \left\{n \in \{1,\ldots,N\} ~:~ n \equiv \gamma \mod 2^{\kappa_h} \right\}$ and note that $Q(N,\kappa_h,\gamma)$ for $\gamma \in \left\{0, \ld, 2^{\kappa_h}-1 \right\}$ are a partition of $\{1,\ld ,N\}$ and that $$\left|Q(N,\kappa_h,\gamma)\right| \leq \bigg \lfloor \dfrac{N}{2^{\kappa_h}} \bigg \rfloor + \xi\ \ \ \mbox{ for some $\xi \in \{0,1\}$.}$$ With the help of Lemma~\ref{independence} we are able to apply Bernstein's inequality (Lemma~\ref{lem bernstein}). For $h \in \{0,\ld, H\}$ we get that

\begin{align}
	\mbb{P}\left( \left| \sum_{n=1}^N{\D(\tb{x}_n)} \right| > t_h \right) &\leq \sum_{\gamma=0}^{2^{\kappa_{h}}-1}\mbb{P}\left( \left| \sum_{n \in Q(N,\kappa_h,\gamma)}
		\D(\tb{x}_n) \right| > \frac{t_h}{2^{\kappa_h}}\right) \nonumber\\
		&\leq 2 \sum_{\gamma=0}^{2^{\kappa_h}-1} \exp\left( - \frac{t_h^2/2^{2\kappa_h}}{2|Q(N,\kappa_h,\gamma)| \leb(K_h)(1-\leb(K_h)) + 2t_h/(3\cdot 2^{\kappa_h})}
			\right) \nonumber \\
		&\leq 2^{\kappa_h +1} \exp\left(- \frac{t_h^2 / 2^{\kappa_h}}{2(1+2^{\kappa_h}/N)N q^{-h} + 2t_h / 3}\right) \nonumber.
\end{align}
Since 
\begin{eqnarray*}
\frac{2^{\kappa_h}}{N} & = & \frac{h+2+ \lceil \log_q d\rceil}{N} \le  \frac{1}{N} \left(\frac{1}{2}\log_q\left(\frac{N}{d\log_q d}\right) +4+\log_q d\right)\\
& \le & \frac{1}{2} \frac{\log_q N}{N} + 4 + \frac{1}{d} \le 5,
\end{eqnarray*}
we obtain 
\begin{equation}\label{using bernstein}
 \mbb{P}\left( \left| \sum_{n=1}^N{\D(\tb{x}_n)} \right| > t_h \right) \le 2^{\kappa_h +1} \exp\left(- \frac{t_h^2 / 2^{\kappa_h}}{ 12 Nq^{-h} + \frac{2}{3}t_h} \right)\,.
\end{equation}

For the choice of $t_h$ we will distinguish two cases

\begin{align} \label{def of t}
	t_h:=	\begin{cases}
			C_1 \sqrt{Nd h q^{-h}2^{\kappa_h} } & \text{ if } h \in \{1,\ld,H\}\\
			C_2 \sqrt{N d 2^{\kappa_0} }  & \text{ if } h=0
			\end{cases}
\end{align}
for constants $C_1,C_2>0$ to be specified later.

Let us consider first the case $h\in\{1,\ld,H\}$. By $\kappa_h= \log_2( h +2 +\lo) $ we get that
\begin{align*}
2^{\kappa_h}q^h h \leq & 2^{\kappa_H}q^HH \leq H^2 q^{H} (4+\log_q d) \leq 2 q^{2 H} (4+\log_q d)\\
\le & 2 q^2 \frac{N}{d} \left(1+\frac{4}{\log_q d}\right) \le c(q) \frac{N}{d},
 \end{align*}
where $c(q)=2 q^2\left(1+\frac{4}{\log_q 2}\right)$. Thus we obtain for $h \in \{1,\ld,H\}$
\begin{align*}
t_h=C_1 \sqrt{Nd h q^{-h}2^{\kappa_h}}  \leq C_1 \sqrt{c(q)} q^{-h}N.
\end{align*}
Furthermore we get
\begin{align}\label{bound t}
\frac{t_h^2/2^{\kappa_h}}{12 q^{-h}N + \frac{2}{3}t_h} \geq \frac{C_1^2 Nd hq^{-h}}{12 q^{-h}N + \frac{2}{3}C_1 \sqrt{c(q)} q^{-h}N} = \dfrac{C_1^2dh}{ 12+ \frac{2}{3} C_1\sqrt{c(q)}}.
\end{align}

Combining \eqref{using bernstein} and \eqref{bound t} we get
\begin{align}\label{prob bound h not zero}
\mbb{P}\left(\left| \sum_{n=1}^N \D(\tb{x}_n) \right| > C_1\sqrt{N h q^{-h}2^{\kappa_h} d}\right) 
 \leq 2\exp\left(\kappa_h \log 2 - \frac{C_1^2}{12+ \frac{2}{3} C_1 \sqrt{c(q)}} \ dh \right).
\end{align}

Consider the case $h=0$, i.e. $t_0 = C_2 \sqrt{N 2^{\kappa_0} d}$. We have
\begin{align*}
2^{\kappa_0} d \leq &  (3 + \log_q d) d  \le N c(q).
\end{align*}
After continuing with the same steps as in the first case we end up with
\begin{align}\label{prob bound h equal zero}
\mbb{P}\left( \left| \sum_{n=1}^N \Delta_{K_0}(\tb{x}_n) \right| > C_2\sqrt{N 2^{\kappa_0} d} \right) \leq 2\exp\left(\kappa_0 \log 2 - \frac{C_2^2}{12 + \frac{2}{3}C_2 \sqrt{c(q)}}\ d\right).
\end{align}

Recall that $\be_h$ and $K_h$ are dependent on a point $\bsy \in [0,1)^d$, respectively. Moreover, we defined $S_h=\left\{K_h(\bsy) \ : \ \bsy \in [0,1)^d \right\}$ with $\left|S_h\right|\leq \frac{1}{2}(2 {\rm e})^d(q^{h+3} + 1)^d.$ Additionally we define 

\begin{align*}
A_{K_h,N,d}:=\left\{\bsf \in (\overline{\mathbb{Z}}_q^*((t^{-1})))^d \ : \ \left| \sum_{n=1}^N{\D(\tb{x}_n)} \right| > t_h \right\}
\end{align*}
\noindent
with $t_h$ defined as in \eqref{def of t} and set

\begin{align}\label{def C3,C4}
 C_3:=\dfrac{C_1^2}{12+ \frac{2}{3} C_1\sqrt{c(q)}}, \ \ \ \mbox{ and }\ \ \  C_4:=\frac{C_2^2}{12+\frac{2}{3}C_2 \sqrt{c(q)}}. 
\end{align}
Then with \eqref{prob bound h not zero} and \eqref{prob bound h equal zero}  we have

\begin{eqnarray}\label{lberndiscbd}
\lefteqn{	\mbb{P} \left( \bigcap_{h=0}^H{\bigcap_{K_h \in S_h}{\left\{ \left|\sum_{n=1}^N \D(\tb{x}_n)\right| \leq  t_h \right\} }}\right) =1-\mbb{P}\left(\bigcup_{h=0}^H{\bigcup_{K_h \in S_h}{A_{K_h,N,d}}} \right) }\nonumber\\
		&\qquad \geq& 1 - \sum_{K_0\in S_0}\mbb{P}(A_{K_0,N,d}) - \sum_{h=1}^H{\sum_{K_h \in S_h}{\mbb{P}(A_{K_h,N,d})}}\nonumber \\
		&\qquad \geq& 1 - |S_0|2 {\rm e}^{\kappa_0\log 2-C_4 d}  - \sum_{h=1}^H |S_h|2 {\rm e}^{\kappa_h\log 2-C_3dh} \nonumber\\
		&\qquad \geq& 1- (2q^3+2)^d {\rm e}^{d(1-C_4) + \kappa_0\log 2} - \sum_{h=1}^H (2 q^{h+3}+2)^d {\rm e}^{d(1-C_3h) + \kappa_h\log 2}.
\end{eqnarray}

We will now choose $C_1=C_1(\varepsilon)$ and $C_2= C_2(\varepsilon)$ such that 
\begin{align} \label{first ineq}
	(2q^3+2)^d {\rm e}^{d(1-C_4) + \kappa_0\log 2} \leq \frac{\varepsilon}{2}
\end{align}
\noindent
and
\begin{align}\label{second ineq}
	(2 q^{h+3}+2)^d {\rm e}^{d(1-C_3h) + \kappa_h\log 2} \leq \frac{\varepsilon}{2^{h+1}}.
\end{align}
\noindent
From \eqref{lberndiscbd}, \eqref{first ineq} and \eqref{second ineq} we then obtain that
\begin{align*}
\mbb{P} \left( \bigcap_{h=0}^H{\bigcap_{K_h \in S_h}{\left\{ \left|\sum_{n=1}^N \D(\tb{x}_n)\right| \leq  t_h \right\} }}\right) \geq 1-\varepsilon.
\end{align*}

Inequality \eqref{first ineq} is equivalent to $$C_4 \ge \frac{1}{d}\left(d \log(2 q^3+2) + d+ \log(2+ \lceil \log_q d\rceil) + \log \frac{2}{\varepsilon}\right).$$ This is certainly satisfied for the choice $$C_4=\log(2 q^3+2)+2+\log \frac{2}{\varepsilon}=\log\left(\frac{4 (q^3+1) {\rm e^2}}{\varepsilon}\right).$$ With \eqref{def C3,C4} it follows that we choose $$C_2=C_4 \frac{1}{3} \sqrt{c(q)}+\sqrt{C_4^2 \frac{1}{9} c(q)+12 C_4} \asymp \log \frac{1}{\varepsilon}.$$

Inequality \eqref{second ineq} is equivalent to $$C_3 \ge \frac{1}{d h}\left(d \log(2 q^{h+3}+2) + d+ \log(h+2+ \lceil \log_q d\rceil) + \log 2^h+ \log \frac{2}{\varepsilon}\right).$$ This is certainly satisfied for the choice $$C_3=\log(2 (q^4+1)) +2+ \frac{\log 2}{2}+\log \frac{2}{\varepsilon}=\log\left(\frac{4\sqrt{2} (q^4+1) {\rm e}^2}{\varepsilon} \right).$$
With \eqref{def C3,C4} it follows that we choose $$C_1=C_3 \frac{1}{3} \sqrt{c(q)}+\sqrt{C_3^2 \frac{1}{9} c(q)+12 C_3} \asymp \log \frac{1}{\varepsilon}.$$ 

Finally by \eqref{prob bound h equal zero}, \eqref{prob bound h not zero}, \eqref{sum leq} we obtain with probability at least $1-\varepsilon$ 

\begin{eqnarray}\label{dependence on theta}
\lefteqn{\sum_{n=1}^N \tb{1}_{[\bszero,\bsy)}(\tb{x}_n) \leq \sum_{n=1}^N\sum_{h=0}^H{\D(\tb{x}_n)} + N\leb([\bszero,\be_{H+1}))}\nonumber \\
&\leq & \sum_{n=1}^N \Delta_{K_0}(\tb{x}_n) + \sum_{h=1}^H\sum_{n=1}^N \D(\tb{x}_n) + N \left(\leb([\bszero,\bsy))+\leb([\bszero,\be_{H+1}))-\leb([\bszero,\bsy))\right) \nonumber\\
&\leq & \sqrt{Nd} \left(C_2\sqrt{2^{\kappa_0}} + \sum_{h=1}^H C_1\sqrt{2^{\kappa_h}}\sqrt{hq^{-h}} \right) + N(\leb([\bszero,\bsy))+q^{-H})\nonumber \\
&\leq & \sqrt{Nd}\left( C_2\sqrt{2^{\kappa_0}} + \sum_{h=1}^{\infty} C_1\sqrt{2^{\kappa_h}}\sqrt{hq^{-h}} \right)  + \sqrt{Nd\log_q d} + N\leb([\bszero,\bsy))\nonumber\\
&\leq & \sqrt{Nd} \left( C_2\sqrt{2^{\kappa_0}} + \sum_{h=1}^{\infty}{ C_1\sqrt{(h +3)hq^{-h}} } + \sqrt{\log_qd}\sum_{h=1}^{\infty}{ C_1\sqrt{hq^{-h}} }\right)\nonumber\\ 
& &\qquad + \sqrt{Nd\log_q d} + N\leb([\bszero,\bsy),
\end{eqnarray}
where we used that $\leb \left([\bszero,\be_{H+1})\right) - \leb\left([\bszero,\bsy)\right)\leq \leb(K_H(y)) \leq q^{-H}$. 

By the choices for $C_1$ and $C_2$ we obtain that
\begin{align}\label{first impl}
\frac{1}{N} \sum_{n=1}^N \tb{1}_{[\bszero,\bsy)}(\bsx_n) - \leb([\bszero,\bsy)) & \le C_5(q,\varepsilon) \sqrt{\frac{d\log d}{N}}
\end{align}
where $C_5(q,\varepsilon) \asymp \log \frac{1}{\varepsilon}$.

If we use \eqref{sum geq} instead of \eqref{sum leq} and the fact that $\leb\left([\bszero,\bsy)\right)-\leb([\bszero,\be_H)) \leq \leb(K_H(y)) \leq q^{-H}$ we get that

\begin{align}\label{second impl}
	\dfrac{1}{N}\sum_{n=1}^N \tb{1}_{[\bszero,\bsy)}(\bsx_n) - \leb([\bszero,\bsy)) \geq - C_5(q,\varepsilon) \sqrt{\frac{d\log d}{N}} 
\end{align}
with $C_5(q,\varepsilon)$ as before.

Finally \eqref{first impl} and \eqref{second impl} imply
\begin{align*}
\left|\dfrac{1}{N}\sum_{n=1}^N \tb{1}_{[\bszero,\bsy)}(\bsx_n) - \leb([\bszero,\bsy)) \right| \leq C_5(q,\varepsilon) \sqrt{\frac{d\log d}{N}}.
\end{align*}
Since $\bsy \in[0,1]^d$ was arbitrary we get that
\begin{align*}
D^*_{N}(\cP_N(\bsf)) \leq C_5(q,\varepsilon) \sqrt{\frac{d\log d}{N}}.
\end{align*}
holds with probability at least $1-\varepsilon$. This finishes the proof. $\qed$

\subsection{The proof of Corollaries~\ref{cor0} and \ref{cor1}}\label{sec_proof_cor1}

Since the proofs of the two corollaries are very similar we only present the proof of Corollary~\ref{cor1}.\\

Let $c(q)>0$ be such that $C(q,\varepsilon)$ from Theorem~\ref{thm1} satisfies $C(q,\varepsilon) \le c(q) \log \varepsilon^{-1}$.
For $\delta\in (0,1)$ and $N \ge 2$ let $\varepsilon_N=6 \delta/(\pi N)^2$ and $$A_{N}:=\left\{ \bsf \in (\Zb)^d \ : \ \ D_N^*(\cP_N(\bsf)) \le c(q) \log \varepsilon_N^{-1} \sqrt{\frac{d \log d}{N}}\right\}.$$
According to Theorem~\ref{thm1} we have $\mathbb{P}(A_N) \ge 1-\varepsilon_N$.

Set $$A:=\left\{ \bsf \in (\Zb)^d \ : \ \ D_N^*(\cP_N(\bsf)) \le c(q) \log \varepsilon_N^{-1} \sqrt{\frac{d \log d}{N}} \ \mbox{ for all $N \ge 2$}\right\}.$$ Then obviously $A=\bigcap_{N \ge 2} A_N$ and hence
\begin{eqnarray*}
\mathbb{P}(A^c) = \mathbb{P}\left( \bigcup_{N \ge 2} A_N^c\right) \le \sum_{N \ge 2} \mathbb{P}(A_N^c) \le \sum_{N \ge 2}\varepsilon_N \le \delta,
\end{eqnarray*}
where $A^c$ is the complement of $A$ in $(\Zb)^d$ and similarly for $A_N^c$. Hence $\mathbb{P}(A) \ge 1-\delta$ and the result follows. $\qed$

\begin{small}
\noindent\textbf{Authors' address:}
\\
\noindent Mario Neum\"uller and Friedrich Pillichshammer,
\\
Department of Financial Mathematics and Applied Number Theory,
Johannes Kepler University Linz, Altenbergerstr.~69, 4040 Linz, Austria\\
 \\
\noindent \textbf{E-mail:} \\
\texttt{mario.neumueller@jku.at}\\
\texttt{friedrich.pillichshammer@jku.at} 
\end{small}


\begin{thebibliography}{10}
\bibitem{aist11} Ch. Aistleitner: Covering numbers, dyadic chaining and discrepancy. J. Complexity 27: 531--540, 2011.

\bibitem{aist13} Ch. Aistleitner: On the inverse of the discrepancy for infinite dimensional infinite sequences. J. Complexity 29: 182--194, 2013.

\bibitem{beck} J. Beck: Probabilistic diophantine approximation, I. Kronecker-sequences. Ann. Math. 140: 451--502, 1994.

\bibitem{bern} S.N. Bernstein: The Theory of Probabilities, Gastehizdat Publishing House, Moscow, 1946.

\bibitem{BLV08} D. Bilyk, M. T. Lacey, and A. Vagharshakyan: On the small ball inequality in all dimensions. J. Funct. Anal. 254: 2470--2502, 2008.

\bibitem{bor} \'{E}. Borel: Les probabilit\'es denombrables et leurs applications arithm\'etiques. Rend. Circ. Mat. Palermo 27: 247--271, 1909.

\bibitem{D07} J. Dick: A note on the existence of sequences with small star discrepancy. J. Complexity 23: 649--652, 2007.

\bibitem{DKS} J. Dick, F.Y. Kuo, and I.H. Sloan: High-dimensional integration: the quasi-Monte Carlo way. Acta Numer. 22: 133--288, 2013.

\bibitem{DP10} J. Dick and F. Pillichshammer: {\it Digital Nets and Sequences. Discrepancy Theory and Quasi-Monte Carlo Integration.} Cambridge University Press, Cambridge, 2010.

\bibitem{DGS} B. Doerr, M. Gnewuch, and A. Srivastav: Bounds and constructions for the star-discrepancy via $\delta$-covers. J. Complexity 21: 691--709, 2005.

\bibitem{DT} M. Drmota and R.F. Tichy: {\it Sequences, Discrepancies and Applications.} Lecture Notes in Mathematics 1651, Springer Verlag, Berlin, 1997. 

\bibitem{gnew} M. Gnewuch: Bracketing numbers for axis-parallel boxes and applications to geometric discrepancy. J. Complexity 24: 154--172, 2008.

\bibitem{gnew12} M. Gnewuch: Entropy, randomization, derandomization, and discrepancy. In: {\it Monte Carlo and Quasi-Monte Carlo Methods 2010.} pp. 43--78, Springer Proc. Math. Stat., 23, Springer, Heidelberg, 2012. 

\bibitem{hnww} S. Heinrich, E. Novak, G.W. Wasilkowski, and H. Wo\'{z}niakowski: The inverse of the star discrepancy depends linearly on the dimension. Acta Arith. 96: 279--302, 2001.

\bibitem{h2004} A. Hinrichs: Covering numbers, Vapnik-\v{C}ervonenkis classes and bounds on the star-discrepancy. J. Complexity 20: 477--483, 2004.

\bibitem{KN} L. Kuipers and H. Niederreiter: {\it Uniform Distribution of Sequences.} John Wiley, New York, 1974. Reprint, Dover Publications, Mineola, NY, 2006.

\bibitem{L95} G. Larcher: On the distribution of an analog to classical Kronecker-sequences. J. Number Theory 52: 198--215, 1995.

\bibitem{LN93} G. Larcher and H. Niederreiter: Kronecker-type sequences and nonarchimedean diophantine approximation. Acta Arith. 63: 380--396, 1993.

\bibitem{LP14} G. Larcher and F. Pillichshammer: Metrical lower bounds on the discrepancy of digital Kronecker-sequences. J. Number Th. 135: 262--283, 2014.

\bibitem{LP14a} G. Leobacher and F. Pillichshammer: {\it Introduction to Quasi-Monte Carlo Integration and Applications.} Compact Textbooks in Mathematics, Birkh\"auser, 2014.

\bibitem{loeb} Th. L\"obbe: Probabilistic star discrepancy bounds for lacunary point sets. 2014. see   arXiv:1408.2220

\bibitem{N92} H. Niederreiter: {\it Random Number Generation and Quasi-Monte Carlo Methods.} No. 63 in CBMS-NSF Series in Applied Mathematics, SIAM, Philadelphia, 1992.

\bibitem{NW08} E. Novak and H. Wo\'zniakowski: \textit{Tractability of Multivariate Problems, Volume I: Linear Information}. EMS, Z\"{u}rich, 2008.

\bibitem{NW10} E. Novak and H. Wo\'zniakowski: \textit{Tractability of Multivariate Problems, Volume II: Standard Information for Functionals}. EMS, Z\"{u}rich, 2010.


\end{thebibliography}
\end{document}